\setlist[enumerate]{%
    labelsep=8pt,%
    labelindent=0.5\parindent,%
    itemindent=0pt,%
    leftmargin=*,%
    listparindent=-\leftmargin,%
    itemsep = 0pt
}
\theoremstyle{plain}
\newtheorem{theorem}{Theorem}[section]
\newtheorem{lemma}[theorem]{Lemma}
\newtheorem{corollary}[theorem]{Corollary}
\newtheorem{proposition}[theorem]{Proposition}
\newtheorem{mythm}{Theorem}
\newtheorem{myprop}[mythm]{Proposition}
\theoremstyle{definition}
\newtheorem{definition}[theorem]{Definition}
\theoremstyle{remark}
\newtheorem{remark}[theorem]{Remark}
\renewcommand{\hat}{\widehat}
\renewcommand{\tilde}{\widetilde}
\newcommand{\real}{\mathbb{R}}
\newcommand{\complex}{\mathbb{C}}
\newcommand{\mass}{\mathop{}\!\mathrm{d}}
\newcommand{\slcover}{\tilde{SL_2(\real)}}
\newcommand{\bog}{\bar{\Omega}_G}
\newcommand{\wolfclass}{\tilde{\mathcal{H}}}
\DeclareMathOperator{\cadj}{coad}
\DeclareMathOperator{\Hom}{Hom}
\DeclareMathOperator{\Ad}{Ad}
\DeclareMathOperator{\ad}{ad}
\DeclareMathOperator{\ind}{ind}
\DeclareMathOperator{\End}{End}
\title[Schwartz estimate on Lie groups]{On the Schwartz Estimate for Hodge Laplacians on semisimple Lie Groups}
\author{Zhicheng Han}
\address{George-August-Universit\"at G\"ottingen, Mathematisches Institut, Bunsenstra\ss e 3-5, 37073 G\"ottingen, Germany}
\email{zhicheng.han@mathematik.uni-gottingen.de}
\begin{document}

\begin{abstract}
    In this paper, we prove Schwartz estimates for Hodge Laplacian and Dirac operators on semisimple Lie groups. Alongside, we gives a version of Kuga's lemma for its Lie algebra cohomology. This is a generalization of similar results of Barbasch and Moscovici \cite{Barbasch1983} on symmetric spaces. The main purpose of such estimates is to study the heat problem not only in the scalar case, but also for sections of vector bundles on homogeneous spaces using Fourier analysis.
\end{abstract}

\maketitle

\tableofcontents

\section{Introduction}
Let $G$ be a real, connected Lie group with infinite center. Denote its Lie algebra as $\mathfrak{g}$. Furnish $G$ with a left-$G$-invariant Riemannian metric. Let $\Lambda^*G$ be the bundle of differential forms over $G$, and let $\Delta_p$ be the Hodge Laplacian acting on the $p$-forms of $G$. This operators is well-known to be a second-order elliptic operator on $G$, which admits a kernel of the heat exonerator $e^{-t\Delta_p}$, which we denote as $k_t^p(g)$.

It is our intention to study the spectral properties of $\Delta_p$ on a semisimple Lie group with infinite center as group manifold. We are particularly interested in the spectral properties of $\Delta_p$ on the $L^2$-spaces. The spectral properties of $\Delta_p$ are closely related to the representation theory of $G$.

The main purpose of this paper is to prove that given the most invariant metric on $G$, we can prove that the heat kernel $k^p_t(g)$ associated with the corresponding Hodge Laplacian $\Delta_p$ is a Schwartz function, defined in the sense of Herb and Wolf \cite{HerbWolf86II}. 

To be more precise, let $G$ be a connected reductive Lie group of class $\wolfclass$. This is a class defined by Wolf that contains all semisimple Lie groups with infinite center \cite{Wolf1974}. If we fix $\theta$ a Cartan involution of $G$, and denotes its fixed point to be $K$, then there is a canonical way to construct a left $G$-invariant metric on $G$ that is also $K$-bi-invariant from the Killing form $B$ of $G$. Denote this metric to be $B^\theta$. Our main result is then stated as the following:
\begin{mythm}[\textbf{Schwartz kernel}]\label{thm: Schwartz kernel}
    Let $t>0$. Then for any connected reductive Lie group $G$ of class $\wolfclass$ and corresponding metric $B^\theta$, the kernels of the Hodge Laplacian $k^p_t$ associated with the differential forms of $G$ are $L^q$-Schwartz functions on $G$, in the sense of Harish-Chandra, for all $q>0$.
\end{mythm}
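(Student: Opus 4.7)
The plan is to combine Fourier analysis on $G$ with a Kuga-type identity for $\Delta_p$. Since $G$ is of class $\wolfclass$, the Plancherel formula of Herb--Wolf decomposes $L^2(G)$ as a direct integral of tempered irreducible unitary representations with an explicit Plancherel density, and the Harish-Chandra--Schwartz space $\mathcal{C}^q(G)$ admits a spectral description compatible with this decomposition. Because the metric $B^\theta$ is left-invariant, the bundle $\Lambda^p G$ trivializes as $G \times \Lambda^p \mathfrak{g}^*$, so $k_t^p$ can be regarded as an $\End(\Lambda^p \mathfrak{g}^*)$-valued function on $G$ whose Schwartz property reduces to the same statement for its scalar matrix entries.

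First, I would invoke the Kuga-type identity announced in the abstract to write $\Delta_p = -\Omega + T_p$, where $\Omega$ is the Casimir built from the Killing form $B$ and $T_p$ is a zeroth-order fiberwise endomorphism of $\Lambda^p \mathfrak{g}^*$ coming from the $\ad$-action. Since $\Omega$ acts by the scalar $c(\pi)$ given by the infinitesimal character on each irreducible $\pi$, the Fourier transform of $k_t^p$ at $\pi$ is $e^{t c(\pi)}\, e^{-t T_p^{\pi}}$, where $T_p^{\pi}$ is a bounded endomorphism with operator norm controlled uniformly in the continuous representation parameters.

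Next, I would substitute this multiplier into the Herb--Wolf Plancherel inversion formula for $k_t^p(g)$. On the tempered dual the real part of $c(\pi)$ is nonpositive and grows quadratically in the continuous Plancherel parameters, while the contribution of $e^{-t T_p^{\pi}}$ and the Plancherel density are at most polynomial in all parameters. Combined with the universal bound $|\langle \pi(g) v,w \rangle| \leq \Xi(g)\,\|v\|\|w\|$ valid for tempered $\pi$, this should yield $|k_t^p(g)| \leq C_N\, \Xi(g)(1+\sigma(g))^{-N}$ for every $N$. Applying the same argument after composing with left- and right-invariant differential operators $X,Y \in U(\mathfrak{g})$ is routine because on the spectral side such operators only contribute polynomial factors in the parameters, and this delivers the full system of seminorms defining $\mathcal{C}^q(G)$ for every $q>0$.

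The main obstacle will be the interplay between the non-scalar correction $T_p$ and the Plancherel measure in the presence of an infinite center. In the symmetric-space situation of Barbasch--Moscovici one uses only $K$-spherical functions and the scalar Casimir, whereas here I must track arbitrary tempered irreducibles of a class-$\wolfclass$ group and verify that the Plancherel density, together with the $K$-type multiplicities appearing in $\Lambda^p \mathfrak{g}^*$, grows at most polynomially in the full parameter set, including the discrete label coming from the infinite center. Once this polynomial bound is in place, the Gaussian-type decay supplied by $e^{t\,\mathrm{Re}\, c(\pi)}$ absorbs all other factors and the Schwartz estimate follows; the supporting Kuga-type lemma promised in the abstract is precisely what allows the argument to reduce to the Casimir.
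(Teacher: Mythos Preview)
Your proposal has a genuine gap at its very first step: the Kuga-type identity on the group manifold does \emph{not} produce a zeroth-order correction. In the symmetric-space setting of Barbasch--Moscovici the Hodge Laplacian on $G/K$ differs from the Casimir by a fiberwise endomorphism, but on the group itself the generalized Kuga lemma (Corollary~\ref{thm: generalized_Kuga}) shows that $\Delta_p = -R(\bar\Omega_G)\otimes\mathrm{Id} + B$ where the remainder $B$ contains terms such as $\sum_k R(X_k)\otimes\cadj^*(X_k)$; here $\tau = R$ is the right regular representation, so $R(X_k)$ is a genuine first-order differential operator. Thus your $T_p$ is first-order, not zeroth-order, and on the Fourier side $T_p^\pi$ involves the unbounded operators $\pi(X_k)$ rather than a fixed fiber endomorphism. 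The claimed uniform operator-norm bound on $e^{-tT_p^\pi}$ therefore fails, and the multiplier factorization $e^{tc(\pi)}e^{-tT_p^\pi}$ does not even make sense as stated. A second issue is that the leading term $-R(\bar\Omega_G)= -R(\Omega_G-2\Omega_K)$ is not the Casimir alone; $\Omega_K$ is not central in $U(\mathfrak{g}_\complex)$ and does not act by a scalar on irreducible $G$-representations, so even the ``scalar part'' of your decomposition is not scalar.

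There is also a circularity problem: the Herb--Wolf Plancherel inversion is established precisely on $\mathcal{S}^2(G)$, so invoking it to \emph{prove} $k_t^p\in\mathcal{S}^q(G)$ presupposes what you want. The paper avoids Fourier analysis entirely and instead proceeds by hard analysis on $G$: it obtains Gaussian bounds for the constant-coefficient strongly elliptic operator $-R(\bar\Omega_G)$, treats the first-order remainder $B$ via the Dyson--Phillips perturbation expansion to get weighted $L^1$ bounds on $L(X^I)k_t^p$, upgrades these to pointwise bounds by a Sobolev lemma, and finally controls two-sided derivatives using $\Ad(K)$-invariance of $k_t^p$ together with a Harish-Chandra-type argument adapted to the noncompact $K$. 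The first-order nature of $B$ is exactly what forces this perturbation-theoretic route in place of the spectral argument you sketch.
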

The property of the function to be of Schwartz class is a vital condition for many applications in sight, because it is the class of functions where the most general version of Plancherel formula is established in current literature. We will later deploy the Plancherel formula in another paper \cite[TOCHANGE]{Thesis}, to study the spectral properties of $\Delta_p$ on groups such as $\slcover$. The proof that $k_t^p$ is a Schwartz function is a crucial step in such applications.

Such a class of Schwartz function denoted by $\mathcal{S}(G)$ was first defined by Harish-Chandra \cite[Section~6]{harish1966} on reductive Lie groups with compact center and later extended by Herb and Wolf \cite{HerbWolf86II} to the wider class of semisimple groups with infinite center. The latter class of Schwartz functions was suitably defined to capture the growth of functions in the $K$-direction, which is now noncompact.

The most general Plancherel formula known to the author was established by Herb and Wolf in the same paper, extending the previous work of Harish-Chandra \cite{harish1975, Harish-Chandra1976II, Harish-chandra1976III}. The main result of this paper therefore validates the application of the Plancherel formula and subsequently representation theory of $G$ to studying the spectral problems of $\Delta_p$ on group manifolds $G$.

The left $G$-invariant differential operators on $G$ can be identified with elements of the universal enveloping algebra $U( \mathfrak{g}_\complex )$. Following the same methodology, we express the Hodge Laplacians in terms of actions of $U( \mathfrak{g}_\complex )$. This is a common strategy used in the study of symmetric spaces, where the Hodge Laplacian is essentially the Casimir element $\Omega_G$ in the center of $U( \mathfrak{g}_\complex )$.

We begin with an expression for the Hodge Laplacian associated with any Lie group $G$ and any left $G$-invariant metric. In the case $G$ is reductive, we fix the most invariant Riemannian metric in sight, that is the Killing form $B$ twisted with the Cartan involution $\theta$:
\begin{equation}
    B^\theta(X, Y) = B(X, \theta Y)
\end{equation}
for $X, Y\in  \mathfrak{g}_\complex $. This exploits the underlying symmetry and gives a refined version of Kuga's lemma on the Lie algebra cohomology of $ \mathfrak{g}$ twisted with any unitary $ \mathfrak{g}$-module $V$ (see \Cref{thm: rep_Laplacian} and \Cref{thm: generalized_Kuga}):
\begin{myprop}
    Given a unitary $\mathfrak{g}$-module $(\pi, V_\pi)$, the Hodge Laplacian $\Delta_p$ on $C^\infty(\mathfrak{g}, V_\pi)$ can be expressed explicitly. If $G$ is further assumed to be reductive, then this expression for the Hodge Laplacian associated with $B^\theta$ is given in terms of the representation $\pi$ and the adjoint representation $\ad$ of $ \mathfrak{g} $.
\end{myprop}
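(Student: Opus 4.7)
The plan is to derive the explicit formula directly from the Chevalley--Eilenberg realisation of the de Rham complex in a left-invariant frame. Fix a basis $\{X_i\}$ of $\mathfrak{g}$ that is orthonormal with respect to the given left-invariant metric, and identify $\Omega^p(G) \otimes V_\pi$ with $C^\infty(G, \Lambda^p \mathfrak{g}^* \otimes V_\pi)$ via left translation, so that $d$ becomes a constant-coefficient operator in this frame. Concretely,
\begin{equation*}
    d = \sum_i \epsilon^i \otimes \pi(X_i) \; - \; \tfrac{1}{2}\sum_{i,j,k} c^k_{ij}\, \epsilon^i \epsilon^j \iota_k,
\end{equation*}
where $\epsilon^i$ and $\iota_j$ denote exterior and interior multiplication by $X_i^*$ and $X_j$, and $c^k_{ij}$ are the structure constants. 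The formal adjoint $d^*$ is obtained from the standard adjoint relations for $\epsilon^i$ and $\iota_j$ together with the adjoints of $\pi(X_i)$; for a unitary $\pi$ these differ from $-\pi(X_i)$ only by a first-order piece determined by the modular character of the metric.

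With this in hand, I would expand $\Delta_p = dd^* + d^*d$ into three pieces: a second-order scalar part of the form $\sum_i \pi(X_i)^2$ acting trivially on the exterior algebra, first-order cross terms that are linear in $\pi(X_i)$ and the structure constants, and purely algebraic terms quadratic in the structure constants. This already yields the general statement of the proposition for an arbitrary Lie group and arbitrary left-invariant metric.

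To refine the expression for reductive $G$ equipped with $B^\theta$, I would split the orthonormal basis as $\{X_i\}_{i\in I_{\mathfrak{k}}} \cup \{X_j\}_{j\in I_{\mathfrak{p}}}$ compatibly with the Cartan decomposition $\mathfrak{g} = \mathfrak{k}\oplus\mathfrak{p}$. The decisive input is that $\ad(Y)$ is $B^\theta$-skew for $Y\in\mathfrak{k}$ and $B^\theta$-symmetric for $Y\in\mathfrak{p}$, which follows from the invariance identity $B([X,Y],Z) + B(Y,[X,Z]) = 0$ combined with the action of $\theta$ on the Cartan decomposition. Using these symmetries, the lowered structure constants obey a clean sign rule, the first-order cross terms reorganise into an $\ad$-expression, and the quadratic structure-constant terms collect into a contribution of the shape $\sum_i \ad(X_i)\,\ad(\theta X_i)$ acting on $\Lambda^p \mathfrak{g}^*$. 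The final formula takes the expected Kuga form of a difference between $\pi(\Omega_G)$ and the action of $\ad$ applied to the Casimir-like element on the exterior algebra.

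The main obstacle is the bookkeeping of signs and index placements. Because $B$ is indefinite, lowering an index via $B^\theta$ versus $B$ differs by the sign imposed by $\theta$, and these signs must be tracked consistently through every term of $dd^* + d^*d$. The most delicate point is verifying that the first-order cross terms genuinely assemble into a single $\ad$-term rather than leaving a residual ``connection'' correction; this is precisely where the choice $B^\theta$, as opposed to a generic left-invariant metric, is indispensable, and where the $K$-bi-invariance of $B^\theta$ enters the computation.
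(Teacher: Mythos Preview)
Your approach is correct and is essentially the paper's: a direct expansion of $dd^*+d^*d$ from the Chevalley--Eilenberg differential, followed by exploitation of the structure-constant identities forced by the Cartan decomposition. The paper organises the bookkeeping differently---it first splits $d=d_\circ+d_\wedge$ into its representation part and its bracket part and then computes the four Laplacian pieces $\Delta_\circ,\Delta_\wedge,\Delta_{\circ,\wedge},\Delta_{\wedge,\circ}$ in explicit multi-index notation rather than the $\epsilon,\iota$ calculus; this buys a clean separation of the coadjoint contributions and makes the derivation property of the residual first-order operator $\square_\circ=\Delta_\circ+\tau(\bar\Omega_G)$ visible, which is used later. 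Your grouping by polynomial degree in $\pi(X_i)$ reaches the same destination.

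One caution on your anticipated endpoint: the final formula is \emph{not} a clean difference of two Casimir-type actions as in the symmetric-space Kuga lemma. In the paper's result the first-order cross terms survive as genuine operators $B_{\mathfrak{k}}=\sum_{k\in I_{\mathfrak{k}}}\pi(X_k)\operatorname{coad}^*(X_k)$ and $C_{\mathfrak{p}}=\sum_{\alpha\in I_{\mathfrak{p}}}\pi(Y_\alpha)\operatorname{coad}^*(Y_\alpha)$ which do not collapse to a scalar; only the purely algebraic piece $\Delta_\wedge$ becomes $\sum_j\operatorname{coad}^*(X_j)\operatorname{coad}(X_j)$. This residual first-order part is precisely the departure from the symmetric-space case that necessitates the perturbation argument later, so you should not expect it to cancel.
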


This is also where we depart from the case of symmetric spaces. In both cases, the differential operator $D$ in question can be written as
\begin{equation*}
    D = -R(\sum_i X_i^2) + B
\end{equation*}
as the sum of the Laplacian $-\sum_i X_i^2$ on functions of $G$ and a perturbation term $B$. In the case of symmetric spaces, $B$ is essentially a scalar operator, whereas in our cases the perturbation term is typically a first-order differential operator with variable coefficients. Moreover, the fact that $K$ is noncompact raises yet further difficulties when we estimate derivatives from both sides.

The main novelty in this paper is first to give a very explicit description of the Hodge Laplacian, so that the perturbation term and corresponding perturbation estimates can be made. Secondly, we incorporate the perturbation theory into the Gaussian estimates of the heat kernel. This allows us to estimate the kernel under derivatives from one side for a large class of differential operators on homogeneous vector bundles, which in particular contains the Hodge Laplacian on Lie groups. Lastly, we also extends an argument of Harish-Chandra to estimate the kernel when $K$ is noncompact.

The paper is organized as follows. In \Cref{section: preliminaries} we collect some general facts about groups of class $\wolfclass$. We also introduce the notion of Schwartz functions on such groups. In \Cref{section: Lie algebra cohomology} we give an explicit expression of the Hodge Laplacian on general Lie algebra cohomology $C^*( \mathfrak{g}; V )$. This allows us to distinguish the first-order perturbation operator. In \Cref{section: elliptic operators} we collect some results on perturbation theory for the heat semigroup. This allows us to control the kernel estimates. Eventually in \Cref{section: proof of the Schwartz estimate} we assemble all the ingredients to prove the Schwartz estimate for the Hodge Laplacian. Lastly in \Cref{section: nilpotent} we discuss how this method can be readily applied to prove a similar result in nilpotent Lie groups. This is an easy consequence derived from the estimates obtained in \Cref{section: proof of the Schwartz estimate} that will be useful in a subsequent paper.

In a subsequent paper \cite[TOCHANGE]{Thesis} we will use the Schwartz estimate to study the spectral properties of the Hodge Laplacian on semisimple Lie groups such as $\slcover$. A similar method is also used to study the spectral properties of the Dirac operator on such groups.


This paper is a part of the author's PhD thesis at the University of G\"ottingen \cite{Thesis}. The author would like to thank his advisor Thomas Schick for suggesting the research question and for carefully reading the first draft.

\section{Preliminaries} \label{section: preliminaries}
In this section we collect some facts about the structure of general semisimple Lie groups with infinite center. We also recall the notion of Schwartz functions on such groups, the Plancherel formula of which was established by Herb and Wolf \cite{HerbWolf86II}. 

Let $G$ be a reductive Lie group. This implies that its Lie algebra admits the following decomposition:
\begin{equation*}
    \mathfrak{g} =  Z_\mathfrak{g}\oplus\mathfrak{g}_0 \quad \text{where $ Z_\mathfrak{g}$ is central and  $\mathfrak{g}_0 =  [\mathfrak{g},  \mathfrak{g} ]$ semisimple  }
\end{equation*}
In \cite{Wolf1974}, Wolf defined a reductive Lie group $G$ to be of class $\wolfclass$ if it satisfies the following properties:
\begin{enumerate}
    \item If $g\in G$ then $\ad(g)$ is an inner automorphism of $ \mathfrak{g}_\complex$;
    \item It contains a closed normal abelian subgroup $Z$ that centralizes the identity component $G^0$ of $G$; 
    \item $ZG^0$ has finite index in $G$;
    \item $Z\cap G^0$ is cocompact in the center $Z_{G^0}$ of $G^0$.
\end{enumerate}
As discussed in \cite[\S 0]{Wolf1974}, this is a convenient class of reductive groups that contains every connected semisimple group and is stable under passage to Levi components of cuspidal parabolic subgroups of $G$. In particular this contains the original class of Harish-Chandra, which are groups where $ G/G^0$ and the center of $[G^0, G^0]$ are compact. In addition it also contains the class of connected semisimple groups with infinite center, such as the universal covers of linear reductive groups like $SL(2, \real)$, $Sp(n, \real)$ and $U(n,n)$.

We assume throughout this paper that $G$ is connected. This lightens some of the exposition significantly. Note in such case $ZG^0 = G^0 = G$.

A \textit{Cartan involution} on $G$ is an involutive automorphism $\Theta$ such that the fixed point set $K = G^\Theta$, which we call the \textit{relative maximal compact subgroup}, is the full inverse image of a maximal compact subgroup of $\Ad(G)$. Note that $K$ is only compact if the center $Z_G$ of $G$ is compact. Fix a Cartan involution $\Theta$ of $G$ and let $K$ be the corresponding relative maximal compact subgroup. Then $K$ has a unique maximal compact subgroup $K_1$ and has a closed normal vector subgroup $V_G$ such that \cite[Proposition~2.1]{HerbSchwartz1}:
\begin{enumerate}
    \item $K = K_1\times V_G$;
    \item $Z = Z_G\cap V_G$ is cocompact in both $V_G$ and $Z_G$.
\end{enumerate}
The Cartan involution $\Theta$ differentiates to give a Cartan involution $\theta$ on $ \mathfrak{g}$.  Denote $ \mathfrak{k, p}$ to be the eigenspaces of $\theta$ corresponding to the eigenvalues $ + 1$ and $-1$ respectively. Choose a maximal abelian subspace $ \mathfrak{a_p} $ of $ \mathfrak{p}$ and form the restricted root system $\Delta^+( \mathfrak{g};  \mathfrak{a_p} )$. By picking the polarity $\Delta^+( \mathfrak{g};  \mathfrak{a_p} )$ on $\Delta( \mathfrak{g};  \mathfrak{a_p}  )$, we form an Iwasawa decomposition
\begin{equation}\label{exp: Iwasawa}
     \mathfrak{g} =  \mathfrak{k}  \oplus  \mathfrak{a_p} \oplus  \mathfrak{n_p}
\end{equation} 
where $ \mathfrak{n_p}$ is the sum of positive $ \mathfrak{a_p}$-root spaces. Denote $A$ and $N$ to be the exponential of $ \mathfrak{a_p}$ and $ \mathfrak{n_p}$ respectively. At group level this gives a Iwasawa decomposition $G = NAK$
\begin{equation}
    x = n(x)\cdot \exp H(x) \cdot \kappa(x),
\end{equation} 
where $n(x) \in  N$, $H(x)\in  \mathfrak{a} $ and $\kappa(x) \in K$. Denote 
\begin{equation*}
    \rho_{ \mathfrak{p} } = \frac{1}{2}\sum_{\alpha\in \Delta^+( \mathfrak{g};  \mathfrak{a_p}  )}\alpha
\end{equation*}
the half sum of positive restricted roots, counted with multiplicity. Define now the zonal spherical function in a similar way as in the classical case (c.f. \cite[\S VII.8]{knapp2016}):
\begin{equation}
    \phi_0^{G/Z}(g):= \int_{K/Z} e^{-(\rho_\mathfrak{p})H(kg)}\mass{(kZ)}.
\end{equation}
Note this enjoys all the desired properties of the classical zonal spherical functions: inverse symmetries, bi-$K$-invariance, and of moderate growth (c.f. \cite[(2.5)]{HerbWolf86II}), i.e.:
\begin{enumerate}[wide = 0pt, label=(\roman*)]
    \item For all $k_1, k_2\in K, g\in G$, $\phi_0^{G/Z}(k_1gk_2) = \phi_0^{G/Z}(g)$ and $\phi_0^{G/Z}(g) = \phi_0^{G/Z}(g^{-1})$;
    \item For some $d\geq 0$ and for all $a\in \exp(\overline{ \mathfrak{a}^+})$:
    \begin{equation}\label{exp: spherical vector estimate}
        \phi_0^{G/Z}(a) \leq C a^{-\rho_ \mathfrak{p}} (1+ \rho_ \mathfrak{p}\log a )^d
    \end{equation}
\end{enumerate}
In particular, $\phi^{G/Z}_0$ is tempered: $\phi_0^{G/Z}\in L^{2+\epsilon}(G/Z)$ for any $\epsilon>0$. 

We also extend the original norm on $ \mathfrak{p}$ suitably to capture the growth in the vector subgroup $V_G$ of $K$. Recall that the Killing form $B(-, -)$ is positive definite on $ \mathfrak{p}$ and negative definite on $ \mathfrak{k}$, and is $G$-bi-invariant. Given 
\begin{equation} \label{exp: refined Cartan decomposition}
    x = v(x)\cdot \kappa_1(x)\cdot \exp X 
\end{equation}
with respect to the Cartan decomposition $G = V_GK_1\exp \mathfrak{p}$, we define
\begin{equation}
    |x|_ \mathfrak{ \mathfrak{p} } :=  B(X,X)^{1/2}, \quad |x|_v = -B(v(x), v(x))^{1/2}.
\end{equation}
This define two seminorms on $G$. Define now $|\cdot|_{ \mathfrak{pz}} :G\to \real_{\geq 0}$ by:
\begin{equation}
    |x|_{ \mathfrak{pz}} := |x|_v + | x |_ \mathfrak{p}
\end{equation}
Note that $|x|_{ \mathfrak{pz}}$ grows at the same rate as $|x|_\mathfrak{p}$ if $Z_G$ is compact. Moreover, $|\cdot|_{\mathfrak{pz}}$ is $\Ad(K)$-invariant, $K_1$-bi-invariant \cite[(2.9)]{HerbSchwartz1}. 

We now define the Schwartz functions on $G$ in a similar way as Harish-Chandra, replacing $|\cdot|_ \mathfrak{p}$ by $|\cdot|_{ \mathfrak{pz}}$. Denote $U( \mathfrak{g}_\complex )$ as the universal enveloping algebra of $ \mathfrak{g}$. It acts on functions of $G$ by differentiation from both left and right. We denote the corresponding action as $L(D)$ and $R(D)$ for $D\in U( \mathfrak{g}_\complex )$.

Given $D_1, D_2\in  U( \mathfrak{g}_\complex ) $ and $r\in \real$, we define a family of seminorms:
    \begin{equation} \label{exp: Schwartz seminorm}
        v^p_{D_1, D_2, r}(f) = \sup_{x\in G}\bigg|(1+ \left\lvert x \right\rvert _\mathfrak{pz} )^r\phi_0^{G/Z}(x)^{-2/p}L(D_1)R(D_2)F(x)\bigg|
    \end{equation}
for $f\in C^\infty(G)$. Then we define \emph{Harish-Chandra $L^p$-Schwartz space} on $G$ to be
\begin{equation} \label{exp: Schwartz space}
    \mathcal{S}^p(G) = \left\{ f\in C^\infty(G): v^p_{D_1, D_2, r}(F)< \infty \text{ for all }D_1,D_2 \in U( \mathfrak{g}_\complex ), r\in \real\right\}.
\end{equation}
We will prove later the kernel functions we care about lie in $S^p(G)$ for all $p>0$. 

As in the classical case, for all$f\in \mathcal{S}^2(G)$, the Plancherel formula of groups of class $\wolfclass$ is established \cite[Theorem~7.6]{HerbWolf86II}. Moreover, $\mathcal{S}^2(G)$ a dense subspace of $L^2(G)$, and is a topological algebra under convolution \cite[\S 6]{HerbWolf86II}.

\section{Lie algebra cohomology} \label{section: Lie algebra cohomology}

In this section we study the Lie algebra cohomology of $\mathfrak{g}$ on arbitrary connected Lie group $ \mathfrak{g}$. We first express the Hodge Laplacian on the Lie algebra cohomology in a general form, and then we specialize it to the case of $\mathfrak{g}$ is reductive. We will see how the terms simplified significantly in this case. 

First we recall some basics of Lie algebra cohomology. The details can be found in \cite{borel2013} In particular, fix a $ \mathfrak{g}$-module $(V,\tau)$ over the field $F = \real$ or $\complex$, where $\tau$ denotes the $ \mathfrak{g}$-module structure of $V$. The $C^q=C^q(\mathfrak{g}; V)=\Hom_F(\Lambda^q\mathfrak{g}, V)$ together with differentials $d:C^q\to C^{q+1}$
\begin{equation}\label{thm: borel1.1.1}
    \begin{split}
        df(X_0,\cdots, X_q)=\sum_i&(-1)^i\tau(X_i) f(X_0, \cdots, \hat{X_i}, \cdots, X_q)\\
    &+\sum_{i<j}(-1)^{i+j} f([X_i, X_j], X_0, \cdots , \hat{X_i}, \cdots, \hat{X_j}, \cdots, X_q).
    \end{split}
\end{equation}
As usual, the $\hat{\cdot}$ stands for omission of the argument. Then we denote $H^*( \mathfrak{g}; V)$ the cohomology of the complex. We denote the first sum as $d_\circ$ and the second sum as $d_{\wedge}$ respectively, that is: $d = d_\circ + d_{\wedge}$. 

Fix now a left $G$-invariant Riemannian metric on $G$. Denote $\cadj:  \mathfrak{g}\to \End(\mathfrak{g}^*_\complex)$ the coadjoint representation of $G$ on $\wedge^*\mathfrak{g}^*$, defined by
\begin{equation}\label{exp: coadj_rep}
    \cadj X(l)(Y) := l([Y, X]) = l(\ad_{-X}Y)
\end{equation}
for $ X, Y\in  \mathfrak{g}$ and $l\in  \mathfrak{g}^*_\complex$. The Riemannian metric also induces a natural hermitian inner product on $\bigwedge^* \mathfrak{g}^*_\complex$. We further assume the $ \mathfrak{g}$-module $V$ is hermitian with hermitian product $\langle -,- \rangle _V$, i.e.:
\begin{equation}
    \langle \tau(X)u, v \rangle_V + \langle u, \tau(X)v \rangle_V = 0
\end{equation} 
for all $u, v\in V$ and all $X\in \mathfrak{g}$. Then $C^*( \mathfrak{g}; V) \cong V\otimes \wedge^* \mathfrak{g}^*_\complex$ is furnished with a hermitian structure, which we denote as $\langle -, - \rangle$.

Follow the convention of \cite[\S II.1.4]{borel2013}, we define the adjoint operator $\delta$ to $d$ with respect to $\langle -,- \rangle$: Fix an orthonormal basis $X_1, \dots, X_n$, Denote $\omega^i$ the dual basis of $ \mathfrak{g}^*_\complex$ with respect to the perfect pairing $\omega_i(X_j) = \delta_{ij}$, and put
\begin{equation*}
    \omega^J = \omega^{j_1}\wedge \omega^{j_2} \wedge \cdots \wedge \omega^{j_q}
\end{equation*} 
for a multi-index $J = \{j_1, \cdots, j_q \}$ and $J_m = \{1, \cdots, m\}$.

Denote the structural constants associated with this Lie algebra as $C^\gamma_{\alpha, \beta}$, i.e.:
\begin{equation}\label{exp: const_struct}
    [X_\alpha, X_\beta] = \sum_{\gamma} C_{\alpha, \beta}^\gamma X_\gamma.
\end{equation}
It is immediate from \eqref{exp: coadj_rep} that:
\begin{equation}\label{exp: const_struct2}
    \cadj X_\alpha (\omega^\beta) = \sum_{\gamma} C_{\gamma, \alpha}^\beta \omega^\gamma.
\end{equation}
We denote the index set of $\mathfrak{g}$ as $I_\mathfrak{g}$. Whenever the index is unspecified, we meant to sum over the whole basis of $ \mathfrak{g}$.

If $\eta\in C^q( \mathfrak{g}; V )$ one writes $\eta_J = \eta(X_{j_1}, \cdots, X_{j_q})$ and $\eta = \sum_{J} \eta_J \cdot \omega^J$. In this way \eqref{thm: borel1.1.1} can be rewritten for $\eta\in C^q( \mathfrak{g}; V)$ as
\begin{equation*}
    (d_\circ\eta)_J = \sum_{1\leq u \leq {q+1}}(-1)^{u-1} \tau(X_{j_u})\cdot \eta_{J(u)} \qquad \text{for } J\subseteq J_m, |J| = {q+1}
\end{equation*}
where $J(u_1, u_2, \dots, u_n)$ denotes the $J$ with the $u_i$th entries removed for $i= 1, \dots, n$. Meanwhile,
\begin{equation} \label{exp: d_wedge}
    \begin{split}
        (d_\wedge \eta)_J &= \sum_{1\leq \alpha<\beta\leq {q+1}}(-1)^{\alpha+\beta}\eta_{[j_\alpha, j_\beta]\cup J(\alpha, \beta)}\\
        &= \sum_{1\leq \alpha<\beta\leq {q+1}}\sum_{j} (-1)^{\alpha+\beta} C_{j_\alpha, j_\beta}^j \eta_{j\cup J(\alpha, \beta)}
    \end{split}
\end{equation}
\begin{definition}
    Let $\delta: C^q( \mathfrak{g}; V) \to C^{q-1}( \mathfrak{g}; V )$ be the linear operator adjoint to $d$:
    \begin{equation*}
        \langle \delta \eta, \mu \rangle = \langle \eta, d\mu \rangle \quad \textrm{for all }\eta\in C^q( \mathfrak{g};  V), \mu\in C^{q-1}( \mathfrak{g};  V)
    \end{equation*}
\end{definition}
The summands $d_\circ$ and $d_\wedge$ admit formal adjoints $\delta_\circ $ and $\delta_\wedge$ respectively. 
Their action can be explicitly expressed on the basis as follows:
\begin{proposition} \label{thm: d_adj}
    The operator $\delta_\circ$ satisfies
    \begin{equation}
        (\delta_\circ \eta)_J = \sum_{j} \tau(X_j)^* \eta_{\{j\}\cup J},
    \end{equation}
    where $\tau(X)^*$ is the adjoint of $\tau(X)$ with respect to $\langle -, - \rangle_V$. The operator $\delta_\wedge$ satisfies
    \begin{equation}
        (\delta_\wedge \eta)_J = \sum_{\alpha<\beta}\sum_{1\leq u\leq q-1}(-1)^{u-1}C_{\alpha, \beta}^{j_u}\eta_{\{\alpha, \beta\}\cup J(u)}
    \end{equation}
    for $\eta\in C^q( \mathfrak{g}; V )$ and $|J| = q-1$.
\end{proposition}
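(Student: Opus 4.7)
The plan is to derive both formulas by direct computation from the defining relation $\langle \delta \eta, \mu \rangle = \langle \eta, d\mu \rangle$. The orthonormal basis $X_1, \ldots, X_n$ induces an orthonormal basis of $\wedge^q \mathfrak{g}^*_\complex$, which together with the Hermitian structure on $V$ gives the inner product on $C^q(\mathfrak{g}; V) \cong V \otimes \wedge^q \mathfrak{g}^*_\complex$ in component form as $\langle \eta, \mu \rangle = \tfrac{1}{q!}\sum_{|J|=q}\langle \eta_J, \mu_J \rangle_V$, with the sum taken over all ordered multi-indices (using the antisymmetry of the components). Each formula then follows by substituting the explicit expression for $(d_\bullet \mu)_J$, applying the adjoint of $\tau(X)$ on $V$, and carefully reparameterizing the resulting sum.

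For $\delta_\circ$ I would substitute $(d_\circ \mu)_J = \sum_u (-1)^{u-1} \tau(X_{j_u}) \mu_{J(u)}$ into $\langle \eta, d_\circ \mu \rangle$ and use the adjoint on $V$ to move the action across, rewriting $\langle \eta_J, \tau(X_{j_u}) \mu_{J(u)} \rangle_V = \langle \tau(X_{j_u})^* \eta_J, \mu_{J(u)} \rangle_V$. The key step is to reparameterize by letting $j = j_u$ and $K = J(u)$, so that $J$ is recovered by inserting $j$ at position $u$ of $K$. The antisymmetry of $\eta$ produces a sign $(-1)^{u-1}$ when moving $j$ to the first slot of $\{j\}\cup K$, which cancels the $(-1)^{u-1}$ already present in $d_\circ$. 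Summing over $u \in \{1, \ldots, q\}$ yields a factor of $q$, and $q/q! = 1/(q-1)!$ matches the normalization of the $C^{q-1}$ inner product. Matching coefficients of $\mu_K$ then reads off $(\delta_\circ \eta)_K = \sum_j \tau(X_j)^* \eta_{\{j\}\cup K}$.

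For $\delta_\wedge$ the plan is analogous but combinatorially more delicate. Substituting $(d_\wedge \mu)_J$ and reparameterizing $J$ by the data $(a, b, M, \alpha, \beta)$ with $a = j_\alpha$, $b = j_\beta$ and $M = J(\alpha, \beta)$, one uses antisymmetry of $\eta$ to bring $a, b$ to positions $1, 2$; this incurs a sign $(-1)^{\alpha+\beta-1}$, which combines with the $(-1)^{\alpha+\beta}$ of $d_\wedge$. Summing over $\alpha < \beta$ yields a factor $\binom{q}{2}$, and the resulting combinatorial factors then match with the analogous reparameterization on the $\delta_\wedge$ side, where $K$ is decomposed by the position $u$ of the index $k_u$ being contracted with the structure constants; the antisymmetry of $\mu$ absorbs the $(-1)^{u-1}$ signs in the formula, and the antisymmetry of $C^\gamma_{\alpha,\beta}$ in its lower indices allows one to convert between ordered and $\alpha<\beta$ sums.

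The main obstacle throughout is the sign bookkeeping. Signs arise from the explicit formulas for $d_\circ$ and $d_\wedge$, from the antisymmetry of forms when reparameterizing (giving $(-1)^{u-1}$ and $(-1)^{\alpha+\beta-1}$ respectively), from the antisymmetry of $C^\gamma_{\alpha, \beta}$, and from the skew-adjointness $\tau(X)^* = -\tau(X)$ inherent in the unitarity of $V$. Ensuring these cancel correctly, while also making the factorials and binomial factor balance across ordered versus strictly increasing multi-index sums, constitutes the real content of the argument.
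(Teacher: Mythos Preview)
Your proposal is correct and follows essentially the same approach as the paper: both arguments verify the adjoint relation $\langle \delta_\bullet \eta, \mu\rangle = \langle \eta, d_\bullet \mu\rangle$ by expanding in components and matching terms. The paper cites Borel--Wallach for $\delta_\circ$ and, for $\delta_\wedge$, writes out both sides with the claimed formula already in hand and checks that the index sets and signs agree; you instead derive the formula by reparameterizing $\langle \eta, d_\bullet \mu\rangle$ directly, which is the same computation organized slightly differently.
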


\begin{proof}
    The first statement is a direct adaptation of the argument in \cite[Proposition~II.2.3]{borel2013}. As for the second, it suffices to verify the identity $\langle \delta_\wedge \eta, \nu \rangle = \langle \eta, d_\wedge \nu \rangle$ for $\eta\in C^q( \mathfrak{g}; V )$ and $\nu \in C^{q-1}( \mathfrak{g}; V )$. Note:
    \begin{equation}
        \langle \eta, d_\wedge \nu \rangle = \sum_{|J|=q}\langle \eta_J, (d_\wedge \nu)_J \rangle = \sum_{|J|=q}\langle \eta_J, \sum_{1\leq \alpha<\beta\leq {q}}\sum_{j} (-1)^{\alpha+\beta} C_{j_\alpha, j_\beta}^j \nu_{j\cup J(\alpha, \beta)} \rangle,
    \end{equation}
    and similarly 
    \begin{equation}    
            \langle \delta_\wedge \eta, \nu \rangle = \sum_{|J'| = q-1}\langle  \sum_{\alpha< \beta}\sum_{1\leq u\leq q-1} (-1)^{u-1}C_{\alpha, \beta}^{j'_u}\eta_{\{\alpha, \beta\}\cup J'(u)}, \nu_{J'} \rangle.
    \end{equation}
    For fixed index set $J$ of $\eta$ in the second expression, the indices $J'$ of $\nu$ associated with $\delta_\wedge\eta$ are
    \begin{equation*}
        \bigcup_{\alpha, \beta}\left\{ J'\in I_\mathfrak{g}\: \middle| \: |J'| =q-1 \quad \{\alpha, \beta\}\cup J'(u) = J \quad \text{for some } 1\leq u\leq q-1 \right\},
    \end{equation*}
    whereas for the same $J$, the corresponding indices of $J'$ associated with $(\delta_\wedge)_{J'}$ are
    \begin{equation*}
        \begin{split}
            &\bigcup_{1\leq u\leq q-1}\bigcup_{\alpha, \beta}\bigcup_{1\leq \alpha' < \beta' \leq q}\left\{ J' \: \middle| \:  j'_{\alpha'} = \alpha, j'_{\beta'} = \beta, \{\alpha, \beta\}\cup J'(u) = J \right\}\\
            = &\bigcup_{1\leq u\leq q-1}\bigcup_{1\leq \alpha' < \beta' \leq q}\left\{ J' \: \middle| \: J'(u) = J(\alpha', \beta') \right\} \\
            = & \bigcup_{j}\bigcup_{1\leq \alpha, \beta\leq q}  \left\{ J' \: \middle| \: J' = j\cup J(\alpha, \beta)\right\}
        \end{split}
    \end{equation*}
    which is precisely the index set in the first expression. Because $C_{\alpha, \beta}^j = -C_{\beta, \alpha}^j$ and $C_{\alpha, \alpha}^j =0$ by the antisymmetry of the Lie bracket, together with the following fact
    \begin{equation*}
        \eta_J = (-1)^{\alpha-1+\beta-2}\eta_{j_\alpha, j_\beta, j_1, \cdots, \hat{j_\alpha}, \cdots, \hat{j_\beta},\cdots, j_q} = (-1)^{\alpha + \beta -1} \eta_{\{j_\alpha, j_\beta\}\cup J(j_\alpha, j_\beta)},
    \end{equation*}
    and the claim is proven.
\end{proof}

Next we express the Hodge Laplacian $ \Delta := \delta \circ d + d\circ \delta$ as sum of four parts:
\begin{equation}\label{exp: hodge laplacian}
    \Delta =(d_\circ + d_{\wedge})(\delta_\circ + \delta_{\wedge})+ (\delta_\circ + \delta_{\wedge})(d_\circ + d_{\wedge}) = \Delta_{\circ} +\Delta_{\wedge} + \Delta_{\circ, \wedge} + \Delta_{\wedge. \circ}
\end{equation}
where:
\begin{align}
    \Delta_\circ           & = d_\circ \delta_\circ + \delta_\circ d_\circ \qquad
                           & \Delta_\wedge = d_\wedge \delta_\wedge + \delta_\wedge d_\wedge        \\
    \Delta_{\circ, \wedge} & = d_\circ \delta_\wedge + \delta_\wedge d_\circ \qquad
                           & \Delta_{\wedge, \circ} = d_\wedge \delta_\circ + \delta_\circ d_\wedge
\end{align}
For convenience, we choose $X_i$ to be an orthonormal basis of $ \mathfrak{g}$ with respect to the metric, and denote
\begin{equation*}
    \bog =  \sum_{i}X_i^2\in U( \mathfrak{g}_\complex ).
\end{equation*} 
We also note that $X_i$ acts via coadjoint representation on the basis of $\wedge^* \mathfrak{g}^*_\complex$ as
\begin{equation} \label{exp: abuse}
    \begin{split}
        \cadj(X_i)(\eta)_J 
        =  \sum_{1\leq u \leq q}\sum_{j\in I_{ \mathfrak{g} }}(-1)^{u-1} C^{j_u}_{j,i}\eta_{j\cup J(u)},
    \end{split}
\end{equation}
This identity comes from the expression \eqref{exp: const_struct2}. Define the \textbf{coadjoint* representation} as the dual representation of the coadjoint representation of $\mathfrak{g}$, i.e.: For all $X\in  \mathfrak{g}$:
\begin{equation}
    ((\cadj^*X)l, l')_{ \mathfrak{g}^*_\complex } = (l, (\cadj X)l')_{ \mathfrak{g}^*_\complex }
\end{equation}
This immediately extends to an action of $ \mathfrak{g}$ on $\wedge^* \mathfrak{g}^*$, which we also denote as $\cadj^*$. Note $\cadj^*$ and $-\ad$ are isomorphic representations by the following diagram:
    \begin{equation*}
        \begin{tikzcd}[row sep = large, column sep = large]
            \mathfrak{g}^*         & \mathfrak{g} \arrow[l, "{\flat, \cong}"swap ] \\
            \mathfrak{g}^* \arrow[r, "{\sharp, \cong}"swap] \arrow[u, "-\cadj_X^*"]  & \mathfrak{g} \arrow[u, "\ad_X"swap]
        \end{tikzcd}
    \end{equation*}
for all $X\in \mathfrak{g}$. Here $\flat$ and $\sharp$ are the musical isomorphisms.

Next we form a general expression for each component of the Hodge Laplacian. The following proposition holds for an arbitrary Lie algebra $ \mathfrak{g}$: 
\begin{proposition}\label{thm: rep_Laplacian}
    Given a hermitian $\mathfrak{g}$-module $(V, \tau)$ and form $C^q( \mathfrak{g}; V )$ as above. Then the Laplacian admits the following expressions:
    \begin{enumerate}[label = \textnormal{(\Alph*)}]
        \item $\Delta_\circ$ acts on $C^q( \mathfrak{g}; V )$ as
              \begin{equation}\label{exp: laplace_circ}
                (\Delta_\circ \eta)_J = -\sum_j \tau(X_j)^*\tau(X_j) \eta_J +\sum_{a}\tau(X_a)\sum_{1\leq u\leq q}(-1)^{u}C^a_{j, j_u} \eta_{\{j\}\cup J(u)};
              \end{equation}
        \item $\Delta_{\wedge}$ acts on $C^q( \mathfrak{g}; V )$ as
              \begin{equation}\label{exp: laplace_wedge}
                  \begin{split}
                    (\Delta_\wedge\eta)_J = \sum_{j} \sum_{\alpha< \beta}C^j_{\alpha, \beta}\Biggl(&\sum_{1\leq \gamma\leq q}(-1)^{\gamma}C^{j_\gamma}_{\alpha, \beta}\eta_{j\cup J(\gamma)}\\
                     + &\sum_{1\leq u < v\leq q}(-1)^{u+v} C^j_{j_u, j_v}\eta_{\{\alpha, \beta\}\cup J(u, v)}\Biggr);
                  \end{split}
              \end{equation}
        \item $\Delta_{\circ, \wedge}$ acts on $C^q( \mathfrak{g}; V )$ as $\sum_k\tau(X_k)^*\cadj(X_k)$;
        \item $\Delta_{\wedge, \circ}$ acts on $C^q( \mathfrak{g}; V )$ as $-\sum_k\tau(X_k)\cadj^*(X_k)$;
    \end{enumerate}
    In particular, if we take $V$ to be unitary, then the $0$-th Laplacian on $C^0( \mathfrak{g}; V )$ is seen to take the form:
    \begin{equation}
        \Delta_0 = -\tau(\bog).
    \end{equation}
\end{proposition}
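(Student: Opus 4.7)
The plan is a direct computation: substitute the explicit formulas for $d_\circ, d_\wedge$ from \eqref{thm: borel1.1.1}--\eqref{exp: d_wedge} and their adjoints from \Cref{thm: d_adj} into the four compositions in \eqref{exp: hodge laplacian}, then collect the resulting terms according to how the multi-indices match.

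For part (A), I would expand $(\delta_\circ d_\circ \eta)_J$ and $(d_\circ \delta_\circ \eta)_J$ side by side. In $(\delta_\circ d_\circ \eta)_J$, isolating the case when the summation index $j$ coming from $\delta_\circ$ is the one re-inserted by $d_\circ$ gives the diagonal contribution $\sum_j \tau(X_j)^*\tau(X_j)\eta_J$; the remaining off-diagonal contributions pair $\tau(X_j)^*$ with some $\tau(X_{j_u})$ acting on $\eta_{\{j\}\cup J(u)}$, and $(d_\circ \delta_\circ \eta)_J$ produces exactly the same off-diagonal indices but with the factors in the opposite order. Adding the two collapses these pairs into commutators $[\tau(X_j)^*, \tau(X_{j_u})]$; skew-hermiticity of $\tau$ together with the representation property identifies each such commutator with $\pm\tau([X_j, X_{j_u}]) = \pm\sum_a C^a_{j, j_u}\tau(X_a)$, giving the stated formula after relabeling.

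For part (B) the computation is purely combinatorial, since $d_\wedge$ and $\delta_\wedge$ act only on the form indices through structural constants. Expanding each of $d_\wedge \delta_\wedge$ and $\delta_\wedge d_\wedge$ produces a sum over an auxiliary pair $(\alpha,\beta)$; depending on whether the intermediate $(q{+}1)$-index matches one of the original indices of $J$ or is a genuinely new index, the term contributes to the $\sum_\gamma$-summand or the $\sum_{u<v}$-summand in \eqref{exp: laplace_wedge}. All signs are controlled with the antisymmetry $C^\gamma_{\alpha,\beta} = -C^\gamma_{\beta,\alpha}$ and the permutation signs already recorded in the proof of \Cref{thm: d_adj}.

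For parts (C) and (D) the key observation is that $d_\circ, \delta_\circ$ contribute the factor $\tau(X_k)^{(*)}$ while $d_\wedge, \delta_\wedge$ contribute structural constants exactly matching the action of $\cadj(X_k)$ on forms as in \eqref{exp: abuse}. Reindexing $d_\circ \delta_\wedge + \delta_\wedge d_\circ$ collapses the auxiliary $(\alpha,\beta)$-sum into the single contraction $\sum_k \tau(X_k)^*\cadj(X_k)$, and the dual computation together with the identification of $\cadj^*$ supplied before the statement yields $-\sum_k \tau(X_k)\cadj^*(X_k)$ for $\Delta_{\wedge,\circ}$, the minus sign arising from passing from $\cadj$ to its dual. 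The degree-zero specialization is then immediate: on $C^0$ only $(\delta_\circ d_\circ \eta)_\emptyset$ survives because $d_\wedge = 0$ and all $\delta$'s vanish in that degree, the second sum of (A) is empty, and skew-hermiticity gives $\sum_j \tau(X_j)^*\tau(X_j) = -\tau(\bog)$. The main obstacle throughout is the sign and multi-index bookkeeping, especially in (B) and in the cross terms, where the auxiliary indices play several overlapping roles.
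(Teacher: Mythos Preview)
Your proposal is correct and follows essentially the same route as the paper: a direct substitution of the formulas for $d_\circ, d_\wedge, \delta_\circ, \delta_\wedge$ from \eqref{thm: borel1.1.1}--\eqref{exp: d_wedge} and \Cref{thm: d_adj} into the four compositions, followed by index bookkeeping. The paper carries out (C) in detail, and the crucial step there is exactly the cancellation you allude to---the ``mixed'' terms in $\delta_\wedge d_\circ$ and $d_\circ \delta_\wedge$ (those involving both an index $u$ and an index $v$ from $J$) annihilate because of the sign identity $(-1)^{v+u[v]} = -(-1)^{u+v[u]}$, after which the survivors are recognized via \eqref{exp: abuse} as $\sum_\gamma \tau(X_\gamma)\cadj(X_\gamma)$; your commutator organization for (A) is a clean way to package the analogous simplification there.
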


\begin{proof}
    We prove the formula for $\Delta_{\circ, \wedge}$ as an example. The rest can be found in \cite[Proposition~1.5]{Thesis}.
    \begin{equation*}
        \begin{split}
            (\delta_\wedge d_\circ \eta)_J &=  \sum_{\alpha<\beta}\sum_{1\leq u \leq q-1} (-1)^{u-1}C^{j_u}_{\alpha, \beta}(d_\circ\eta)_{\{\alpha, \beta\}\cup J(u)} \\
            &= \sum_{\alpha<\beta}\sum_{1\leq u\leq q-1}(-1)^{u-1}C_{\alpha, \beta}^{j_u}\Bigg( \tau(X_\alpha)\eta_{\beta\cup J(u)} - \tau(X_\beta)\eta_{\alpha\cup J(u)} \\
            &\qquad\qquad  -\sum_{\substack{1\leq v\leq q \\ v\neq u}} (-1)^{v[u]}\tau(X_{j_v})\eta_{\{\alpha, \beta\}\cup J(u,v)}\Bigg).
        \end{split}
    \end{equation*}
    In the last step, the notation $v[u]$ is an abbreviation of notation, defined by
    \begin{equation}
        v[u] = \begin{cases}
            v   & \textnormal{ if }v < u \\
            v-1 & \textnormal{ if }u < v.
        \end{cases}
    \end{equation}
    On the other hand,
    \begin{align*}
        (d_\circ \delta_\wedge\eta)_J & =  \sum_{1\leq v\leq q+1}(-1)^{v-1} \tau(X_{j_v})(\delta_\wedge\eta)_{J(v)} \\
                                    & = \sum_{1\leq v\leq q+1}\sum_{\alpha<\beta} \sum_{\substack{1\leq u\leq q   \\ u\neq v}}(-1)^{v-1}\tau(X_{j_v})C^{j_u}_{\alpha, \beta}\cdot (-1)^{u[v]}\eta_{\{\alpha, \beta\}\cup J(u,v)}
    \end{align*}
    We see the last summand in $d_\circ\delta_\wedge\eta$ cancels with $\delta_\wedge d_\circ$ as
    \begin{equation*}
        (-1)^{v+u[v]} = -(-1)^{u+v[u]} 
    \end{equation*}
    for all $u\neq v$. Hence
    \begin{equation}
        \begin{split}
            (\Delta_{\circ, \wedge}\eta)_J &= \sum_{1\leq u \leq q}\sum_{\alpha< \beta}(-1)^{u-1}C^{j_u}_{\alpha, \beta}\left( \tau(X_\alpha)\eta_{\beta\cup J(u)}-\tau(X_\beta)\eta_{\alpha\cup J(u)} \right)\\
            &= \sum_{\gamma}(\tau(X_\gamma)\cadj(X_\gamma))(\eta_J),
        \end{split}
    \end{equation}
    where the last identity comes from our convention \eqref{exp: abuse}. Hence the third statement is proven. The rest is proved similarly. 
\end{proof}

\begin{remark} \label{rmk: square circ define as derivation}
    For later purposes, we note that the operator 
    \begin{equation*} \label{exp: square_circ}
        \begin{split}
            (\square_\circ \eta)_J := (\Delta_{\circ}\eta + \tau(\bog)\eta)_J = \sum_{a}\tau(X_a)\sum_{\substack{j \\ 1\leq u \leq q} }(-1)^{u}C^a_{j, j_u} \eta_{\{j\}\cup J(u)}
        \end{split}
    \end{equation*}
    acts on the $C^q( \mathfrak{g}; V )\cong  V \otimes \wedge^p  \mathfrak{g}^*_\complex $ as derivations on the exterior algebra part, in the following sense:
    \begin{equation}
        \square_\circ(\eta_J\omega^J) = \square_\circ (\eta_J \omega^{J_1})\wedge \omega^{J_2} + \omega^{J_1} \wedge \square_\circ (\eta_J\omega^{J_2})
    \end{equation}
    for any $\omega^J = \omega^{J_1}\wedge \omega^{J_2}$. This can be proved by comparing the expressions on both sides:
    \begin{equation*}
        \begin{split}
            &\square_\circ (\eta_J \omega^{J_1})\wedge \omega^{J_2} + \omega^{J_1} \wedge \square_\circ (\eta_J\omega^{J_2})\\
        =& \sum_{a} \tau(X_a)(\eta_{J})\biggl(  \sum_{\substack{j \\ 1\leq u\leq |J_1|}}(-1)^{u}  C^a_{j, j_u}\omega^{j}\wedge \omega^{J_1(u)}\wedge \omega^{J_2} \\
        &\ \ + \sum_{\substack{j \\ |J_1|+1\leq u \leq |J_2|}} (-1)^{u - |J_1|} C^{a}_{j, j_u}\omega^{J_1}\wedge \omega^j \wedge \omega^{J_2(u-|J_1|)}\biggr) \\
        =& \square_\circ(\eta_J\omega^J)
        \end{split}
    \end{equation*}
    where the last identity follows from $(-1)^{u - |J_1|}\omega^{J_1}\wedge\omega^j = (-1)^u \omega^j \wedge \omega^{J_1}$. Hence the claim is proven.
\end{remark}
If we assume further that $ \mathfrak{g}$ is real reductive, one can fix a non-degenerate $G$-invariant bilinear form $B(-,-)$ on $ \mathfrak{g}_\complex$ that restricts to the Killing form on $[ \mathfrak{g, g} ]$. If we fix a Cartan involution $\theta$ as in the previous section, we note $B(-,-)$ is negative definite on $ \mathfrak{k}$ and positive definite on $ \mathfrak{p}$. This gives us a positive definite $\Ad(K)$-invariant hermitian inner product, denoted as $B^\theta$, such that
\begin{equation}
    B^\theta(X, Y) := B(X, \theta \bar{Y})
\end{equation}
for all $X, Y\in  \mathfrak{g}_\complex$, where $\bar{\cdot}$ denotes the complex conjugation of $ \mathfrak{g}_\complex$ over $ \mathfrak{g}$.  In this case the previous $\bog$ admits a nicer expression:
\begin{equation}
    \bog  = \Omega_G - 2\Omega_K\in U( \mathfrak{g}_\complex ) 
\end{equation} 
Recall $\Omega_G$ is the Casimir element in the center $Z( \mathfrak{g}_\complex )$ of $U( \mathfrak{g}_\complex )$ induced by the $G$-bi-invariant bilinear form $B$:
\begin{equation*}
    \Omega_G = \sum_{i}X_i X^i
\end{equation*}
for a basis $\{X_i\}$ of $ \mathfrak{g}$ with the dual basis $\{X^i\}$ with respect to $B$. Here $\Omega_K$ is defined in a similar way by replacing the basis of $ \mathfrak{g}$ by that of $ \mathfrak{k}$. If we take $\{X_i\}$ to be an orthonormal basis of $ \mathfrak{g}$ with respect to $B^\theta$, then $\bog = \sum_{i}X_i^2$ is just the sum of squares.

Fix an orthonormal basis of $ \mathfrak{g}$ 
\begin{equation*}
    \{ X_1, \cdots, X_m, Y_1, \cdots, Y_{n-m}\}
\end{equation*} 
with respect to the Killing form $B^\theta$, such that $\{X_i\}$ is an the orthonormal basis of $ \mathfrak{k}$ and $\{Y_\alpha\}$ that of $\mathfrak{p}$. We denote the index sets of $ \mathfrak{k}$ and $ \mathfrak{p}$ as $I_{\mathfrak{k}}$ and $I_ \mathfrak{p}$ respectively, and again $I_{\mathfrak{g}} = I_{ \mathfrak{k}} \sqcup I_{\mathfrak{p}}$ the index set of $ \mathfrak{g}$. Before the proof of the corollary, we remark that the coadjoint and coadjoint* representations satisfy the following relations when restricted to $ \mathfrak{k}$ and $ \mathfrak{p}$ respectively:
\begin{equation} \label{exp: cadj vs adj}
    \cadj|_{ \mathfrak{k} } = \cadj^*|_{ \mathfrak{k} } \qquad \cadj|_{ \mathfrak{p} } = -\cadj^*|_{ \mathfrak{p} }.
\end{equation}
\begin{corollary}[\textbf{Generalized Kuga's Lemma}]\label{thm: generalized_Kuga}
    Assume $\mathfrak{g}$ is reductive with a Cartan decomposition $ \mathfrak{g} =  \mathfrak{k} \oplus  \mathfrak{p}$ as above. Let $V$ be an hermitian $\mathfrak{g}$-module. Define a bigrading on $C^k( \mathfrak{g}; V ) = \sum_{p+q=k}C^{p,q}( \mathfrak{g}; V )$ with
    \begin{equation*}
        C^{p,q}( \mathfrak{g}; V ) := \Hom_\complex(\wedge^p \mathfrak{k}\otimes \wedge^q{\mathfrak{p}},V) \cong V\otimes \wedge^p  \mathfrak{k}^* \otimes \wedge^q  \mathfrak{p}^*.
    \end{equation*}
    Then $\Delta_\circ$ and $\Delta_\wedge$ take the following simplified form:
    \begin{enumerate}[label = \textnormal{(\Roman*)}]
        \item $\Delta_\circ = \tau(\bog) + \square_\circ$ acts on $C^{p, 0}( \mathfrak{g}; V )$ and $C^{0, q}( \mathfrak{g}; V )$ by
              \begin{equation}
                      \begin{split}
                        \square_\circ \Big|_{C^{p,0}( \mathfrak{g}; V )}&\cong +\sum_{j\in I_ \mathfrak{g} } \tau(X_j)\cadj^*(X_j) \\
                        \square_\circ\Big|_{C^{0, q}( \mathfrak{g}; V )}&\cong  - \sum_{j\in I_ \mathfrak{g} }\tau(X_j)\cadj^*(X_j)
                      \end{split}
              \end{equation}
             This extends to an action on $C^{p,q}( \mathfrak{g}; V )$ by derivation, as a consequence of \Cref{rmk: square circ define as derivation}.
        \item $\Delta_\wedge$ acts on $C^n( \mathfrak{g}; V )$ by $\sum_{j} \cadj^*(X_j)\cadj(X_j)$.
    \end{enumerate}
    In particular, if we abbreviate the action by:
    \begin{equation}
        \begin{split}
            A_ \mathfrak{g}  &= -\tau(\bog):C^{p,q}( \mathfrak{g}; V ) \longrightarrow C^{p,q}( \mathfrak{g}; V)\\
            B_ \mathfrak{k}  &= \sum_{k \in I_ \mathfrak{k} }\tau(X_k)\cadj^*(X_k): C^{p,q}( \mathfrak{g}; V ) \longrightarrow C^{p, q}( \mathfrak{g}; V )\\
            C_ \mathfrak{p} & =  \sum_{\alpha\in I_ \mathfrak{p} }\tau(Y_\alpha)\cadj^*(Y_\alpha): C^{p,q}( \mathfrak{g}; V )\longrightarrow C^{p-1, q+1}( \mathfrak{g}; V )\oplus C^{p+1, q-1} ( \mathfrak{g}; V )\\
            D_{ \mathfrak{g} }& = \frac{1}{2}\cadj^*(\Omega_G):C^{p,q}( \mathfrak{g}; V ) \longrightarrow C^{p,q}( \mathfrak{g}; V)
        \end{split}
    \end{equation}
    then each component of $\Delta_1$ acts on $1$-forms $C^1( \mathfrak{g}; V ) \cong (V\otimes  \mathfrak{k}^* )\oplus (V\otimes  \mathfrak{p}^*) $ via the following operator-valued matrices:
    \begin{equation}
        \begin{split}
            \Delta_\circ|_{C^1( \mathfrak{g}; V  )} = \begin{pmatrix}
                A_ \mathfrak{g} +B_ \mathfrak{k}  &-C_ \mathfrak{p}\\
                C_{ \mathfrak{p} }  & A_ \mathfrak{g} - B_ \mathfrak{k} 
            \end{pmatrix} &\qquad \Delta_\wedge|_{C^1( \mathfrak{g}; V  )} = \begin{pmatrix}
                D_{ \mathfrak{g} }  &                   \\
                                  & D_{ \mathfrak{g} }
            \end{pmatrix} \\
            \Delta_{\circ, \wedge}|_{C^1( \mathfrak{g}; V  )} = \begin{pmatrix}
                -B_ \mathfrak{k}  & C_ \mathfrak{p} \\
                C_ \mathfrak{p}  & -B_ \mathfrak{k}
            \end{pmatrix} &\qquad \Delta_{\wedge, \circ}|_{C^1( \mathfrak{g}; V  )} = \begin{pmatrix}
                -B_ \mathfrak{k}  & -C_ \mathfrak{p} \\
                -C_ \mathfrak{p}  & -B_ \mathfrak{k}
            \end{pmatrix}
        \end{split}
    \end{equation}
    Summing all terms up, the $1$-th Laplacian on $C^1( \mathfrak{g}; V )$ is seen to take the form:
    \begin{equation}
       \Delta_1 = \begin{pmatrix}
            A_ \mathfrak{g}  -B_ \mathfrak{k} +D_{ \mathfrak{g} } & -C_ \mathfrak{p}                   \\
            C_ \mathfrak{p}                   & A_ \mathfrak{g}  -3B_ \mathfrak{k} +D_{ \mathfrak{g} }
        \end{pmatrix}
        \label{exp: 1_laplacian block}
    \end{equation}
\end{corollary}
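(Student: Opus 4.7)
The plan is to specialise \Cref{thm: rep_Laplacian} using three inputs: the hermitian property $\tau(X)^* = -\tau(X)$; the Cartan splitting $I_\mathfrak{g} = I_\mathfrak{k}\sqcup I_\mathfrak{p}$ with bracket rules $[\mathfrak{k},\mathfrak{k}]\subseteq\mathfrak{k}$, $[\mathfrak{k},\mathfrak{p}]\subseteq\mathfrak{p}$, $[\mathfrak{p},\mathfrak{p}]\subseteq\mathfrak{k}$; and the sign rule \eqref{exp: cadj vs adj} converting $\cadj$ to $\cadj^*$ with a flip on the $\mathfrak{p}$-part. For the cross terms I would apply these to \Cref{thm: rep_Laplacian}(C,D) and split the sums over $I_\mathfrak{k}\sqcup I_\mathfrak{p}$; this immediately yields $\Delta_{\circ,\wedge} = -B_\mathfrak{k}+C_\mathfrak{p}$ and $\Delta_{\wedge,\circ} = -B_\mathfrak{k}-C_\mathfrak{p}$. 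The block pattern of the corresponding matrices then follows from the bracket rules alone: $\cadj^*(\mathfrak{k})$ preserves each of $\mathfrak{k}^*$ and $\mathfrak{p}^*$ (so $B_\mathfrak{k}$ is block-diagonal on $C^1$), while $\cadj^*(\mathfrak{p})$ swaps them (so $C_\mathfrak{p}$ is block off-diagonal).

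For statement~(I), the derivation property of \Cref{rmk: square circ define as derivation} reduces the verification to the two building blocks $C^{1,0}$ and $C^{0,1}$; the general $C^{p,0}$ and $C^{0,q}$ cases then follow by extending as a derivation on the exterior factor. On a 1-form $\eta = \eta_k\omega^k$, both sides of the claimed identity are expressible in terms of structure constants via \eqref{exp: const_struct2} and the definition of $\square_\circ$ in \Cref{rmk: square circ define as derivation}, and the equality reduces to $C^\delta_{k,j} + C^j_{k,\delta} = 0$ when $k\in I_\mathfrak{k}$, respectively to $C^\delta_{k,j} = C^j_{k,\delta}$ when $k\in I_\mathfrak{p}$. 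These are precisely the $B^\theta$-skew-adjointness of $\ad_{X_k}$ (a consequence of $\Ad(K)$-invariance of $B^\theta$) and the $B^\theta$-self-adjointness of $\ad_{Y_\alpha}$ (from $\theta Y_\alpha = -Y_\alpha$ together with $\theta$-invariance of $B$). The opposite signs in the two formulas for $\square_\circ$ are therefore the exact algebraic trace of $\theta$ distinguishing $\mathfrak{k}$ from $\mathfrak{p}$.

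For statement~(II), on $C^1$ the formula of \Cref{thm: rep_Laplacian}(B) reduces to the single contraction $M^j_{j_1} = \sum_{\alpha<\beta}C^j_{\alpha,\beta}C^{j_1}_{\alpha,\beta}$, since the $\sum_{u<v}$ summand is empty for $q=1$. By antisymmetry of the structure constants, $M^j_{j_1} = \tfrac12\sum_{\alpha,\beta}C^j_{\alpha,\beta}C^{j_1}_{\alpha,\beta}$; applying Parseval against the orthonormal basis $\{X_\beta\}$ and splitting $\alpha\in I_\mathfrak{k}\sqcup I_\mathfrak{p}$ using the opposite adjoint types of $\ad_\mathfrak{k}$ and $\ad_\mathfrak{p}$ collapses this contraction to a matrix coefficient of $\cadj^*(\Omega_G)$, yielding $\Delta_\wedge|_{C^1} = \tfrac12\cadj^*(\Omega_G) = D_\mathfrak{g}$; this is block-diagonal by $\Ad(K)$-centrality of $\Omega_G$. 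Assembling $\Delta_1 = \Delta_\circ + \Delta_\wedge + \Delta_{\circ,\wedge} + \Delta_{\wedge,\circ}$ block-wise then produces \eqref{exp: 1_laplacian block}; the only arithmetic worth checking is that the $\pm B_\mathfrak{k}$ from $\square_\circ|_{C^{1,0}}$ versus $\square_\circ|_{C^{0,1}}$ combines with the two $-B_\mathfrak{k}$ contributions from $\Delta_{\circ,\wedge}$ and $\Delta_{\wedge,\circ}$ to give the net diagonal entries $-B_\mathfrak{k}$ and $-3B_\mathfrak{k}$. The chief obstacle throughout is the bookkeeping among the four sign sources — hermitian adjoint, \eqref{exp: cadj vs adj}, the two adjoint types of $\ad$, and the dual-basis conventions — but no individual step is conceptually hard once the reduction to 1-forms via \Cref{rmk: square circ define as derivation} is in place.
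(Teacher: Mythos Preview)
Your approach is essentially the same as the paper's. Both specialise \Cref{thm: rep_Laplacian} via the structure-constant symmetries coming from the Cartan decomposition: the paper writes these as $C^\beta_{\alpha,i}=C^\alpha_{i,\beta}=-C^i_{\beta,\alpha}$ and $C^k_{i,j}=C^i_{j,k}$ (with $i,j,k\in I_\mathfrak{k}$, $\alpha,\beta\in I_\mathfrak{p}$), which is exactly your $B^\theta$-skew-adjointness of $\ad|_\mathfrak{k}$ and self-adjointness of $\ad|_\mathfrak{p}$. The only organisational difference is that for~(I) you reduce to $C^{1,0}$ and $C^{0,1}$ via \Cref{rmk: square circ define as derivation} and then verify the scalar identity, whereas the paper computes $\square_\circ$ directly on $C^{p,0}$ with $J\subseteq I_\mathfrak{k}$; both routes collapse to the same structure-constant manipulation. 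One small point: your treatment of~(II) is only carried out on $C^1$, while the statement is for general $C^n$; the paper likewise omits the general case here (deferring to the thesis), so this matches the level of detail in the source, but be aware that $\Delta_\wedge$ is \emph{not} a derivation, so the extension to higher degree genuinely requires the full two-term formula \eqref{exp: laplace_wedge} and does not follow automatically from the $1$-form case.
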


\begin{proof}
    The details of the proof can be found in \cite[Corollary~1.7]{Thesis}. The main ingredient is to refine the results from \Cref{thm: rep_Laplacian} and by exploiting the extra symmetries in the structural constants: Given $X_i, X_j, X_k \in  \mathfrak{k}$ and $Y_\alpha, Y_\beta\in  \mathfrak{p} $ some basis vectors, one readily verify the following identity:
    \begin{equation*}
        C^{\beta}_{\alpha, i} = C^{\alpha}_{i, \beta}= -C_{\beta, \alpha}^i \qquad C^k_{i, j} = C^i_{j, k}
    \end{equation*}
    whereas all structural constants $C^{i}_{\alpha, j}$ vanish by orthogonality. 
    
    Assuming $J\subseteq I_\mathfrak{k} $, the coadjoint action for fixed $a$ can be rewritten as:
    \begin{align*}
        (\square_\circ \eta)_J &= \sum_{a\in I_ \mathfrak{g} }\tau(X_a)\sum_{\substack{j \\ 1\leq u \leq q} } (-1)^{u-1}C^a_{j, j_u} \eta_{j\cup J(u)} \\
                &= -\sum_{a\in I_ \mathfrak{k} }\biggl(\tau(X_a)(-1)^u\sum_{\substack{1\leq u \leq q \\j\in I_\mathfrak{k}} }C^a_{j, j_u}\eta_{j\cup J(u)}\biggr)\\
                 &\qquad \qquad  - \sum_{\alpha\in I_ \mathfrak{p}}\biggl(\tau(X_\alpha)(-1)^u\sum_{\substack{1\leq u \leq q \\\beta\in I_\mathfrak{p}} }C^\alpha_{\beta, j_u}\eta_{j\cup J(u)}\biggr)\\
                &= \sum_{a\in I_ \mathfrak{k} }\biggl(\tau(X_a)(-1)^u\sum_{\substack{1\leq u \leq q \\j\in I_\mathfrak{k}} }C^j_{a, j_u}\eta_{j\cup J(u)}\biggr)\\
                &\qquad \qquad + \sum_{ \alpha\in I_ \mathfrak{p}}\biggl(\tau(X_\alpha)(-1)^u\sum_{\substack{1\leq u \leq q \\ \beta\in I_\mathfrak{p}} }C^\beta_{\alpha, j_u}\eta_{j\cup J(u)}\biggr)\\
                &= \sum_{a\in I_ \mathfrak{g} }\tau(X_a)\cadj(X_a)(\eta_{J}),
    \end{align*}
   where in the last identity we used $\cadj X_\alpha (\omega^\beta) = \sum_{\gamma} C_{\gamma, \alpha}^\beta \omega^\gamma$, as well as our convention \eqref{exp: abuse}, and then by exploiting the identities of the structural constants above. The rest of the identities are derived in a similar way.
\end{proof}

\section{Elliptic operators on Lie groups} \label{section: elliptic operators}

In this section we review and prove some classical results about kernel estimates of the heat semigroup. The strategy is to first develop a Gaussian estimate for the second order differential operator $\bog$. 

We first derive a Gaussian upper bound for strongly elliptic operators with constant coefficients. Given a continuous representation $(\pi, H_\pi)$ of $G$, i.e, a homomorphism $\pi$ from $G$ to the group of linear transformation on $H_\pi$ such that the map
\begin{equation*}
    G\times H_\pi \to H_\pi \qquad (g, v)\mapsto \pi(g)v
\end{equation*}
is continuous. We can choose a basis $\{X_i\}$ of $\mathfrak{g}$ and define \emph{differential operator with constant coefficients} as a polynomial in $D_i = \pi(X_i)$, where:
\begin{equation} \label{exp: differential operator}
    D = \sum_{I} c_ID^I
\end{equation} 
with $D^I = D_{i_1}D_{i_2}\dots D_{i_n}$ for $I=(i_1, \cdots, i_n)$, and $c_I$ are complex coefficients. We say the differential operator is \emph{strongly elliptic} if its associated principal symbol satisfies the following inequality
\begin{equation*}
    \operatorname{Re}\left( (-1)^{\frac{m}{2}}\sum_{|I| = m}c_I\xi^I \right)>0
\end{equation*}
for all $\xi\in \real^d \backslash \{0\}$, where $m$ is the order of the differential operator. Furthermore, we denote the $C^n$-vectors in the representation $H_\pi$ as
\begin{equation}
    C^n(H_\pi) = \bigcap_{1\leq i_1, \dots, i_n\leq d} \operatorname{Dom}(\pi(X_{i_1}\dots X_{i_n})),
\end{equation}
which defines a subspace of the smooth vectors of $H_\pi$ independent of the choice of basis \cite[Page~8]{RobinsonBook1991}. The $C^n$-norm of $v\in H_\pi$ is defined as
\begin{equation}  \label{exp: Ck-norm}
    \left\lVert v \right\rVert _{C^n(H_\pi)} =\sup_{|I|\leq n} \left\lVert \pi(X^I) v \right\rVert _{H_\pi}
\end{equation}
and the analytic vectors of $H_\pi$ are defined to be those $v\in H_\pi$ such that
\begin{equation*}
    \sum_{n\geq 1}\frac{t^n}{n!}\left\lVert v \right\rVert _{C^n(H_\pi)} < \infty
\end{equation*}
for some $t>0$. It is a classical result proved independently by Langlands \cite{langlandsthesis} and Nelson \cite{Nelson:1959a} that the analytic vectors are dense in the continuous representation $H_\pi$ of $G$. We collect some results of Langlands' thesis which we will use in establishing the Gaussian estimate. The details can be found in \cite{RobinsonBook1991}.

Let $(\pi, H_\pi)$ be weakly* or strongly continuous representation, and let $D$ be a strongly elliptic operator associated with $\pi$. Then $D$ is closable and its closure $\bar{D}$ generates a holomorphic semigroup, denoted as $e^{-t\bar{D}}$ on respective representations satisfying the property:
\begin{equation*}
    \bar{D}v = \lim_{t\to 0} \frac{v - e^{-t\bar{D}}v}{t}
\end{equation*}
for all $v\in \mathrm{Dom}(D)$. Moreover, the semigroup satisfies the following the property \cite[Theorem~1.5.1]{RobinsonBook1991}:
\begin{enumerate}[label = \Roman*.]
    \item $e^{-t\pi(\bar{D})}H_\pi\subseteq H_\pi^\infty$ maps the space into its smooth vectors $H_\pi^\infty$ for all $t>0$;
    \item The map $z\mapsto e^{-z\pi(\bar{D})}$ defines a holomorphic family of bounded operators on $H_\pi$ in the sector $\{z\in \complex: |\arg z|<C\}$ for some $C\in (0, \pi/2]$ depending on the elliptic operator. Moreover, $e^{-t\pi(\bar{D})}H_\pi\subseteq \operatorname{Dom}(\bar{D})$ for $t>0$ and 
    \begin{equation}\label{exp: coarse holomorphy bound}
        \left\lVert \pi(\bar{D}) e^{-t\pi(\bar{D})} v \right\rVert _{H_\pi} \leq M t^{-1} \left\lVert v \right\rVert _{H_\pi}
    \end{equation}
    for some $M\geq 1$.
\end{enumerate}
Moreover, there exists $k, l>0$ such that we have the following small time estimate \cite[Theorem~II.2.2]{RobinsonBook1991}:
\begin{equation}
    \label{exp: robinsonII.2.1}
    \left\lVert e^{-t\pi(\bar{D})}v \right\rVert _{C^n(H_\pi)} \leq kl^n n! t^{-\frac{n}{m}} \left\lVert v \right\rVert _{H_\pi} 
\end{equation}
for all $v\in H_\pi$ and $t\in (0, 1]$. On the other hand, using the fact that $e^{-t\bar{D}}$ is a holomorphic family of bounded operators for $t>0$, and general semigroup theory \cite[p.96]{RobinsonBook1991}, we choose the lower bound of the operator norm
\begin{equation*}
    \omega = \inf_{t>0} \frac{\log \| e^{-t\bar{D}} \| }{t} > -\infty.
\end{equation*} 
Here $\left\lVert \cdot \right\rVert $ denotes the operator norm on $H_\pi$. Consequently there is a $M\geq 1$, such that
\begin{equation}
    \label{exp: continuity bound}
    \left\lVert e^{-t\bar{D}}\phi \right\rVert _{H_\pi}\leq M e^{\omega t}\left\lVert \phi \right\rVert _{H_\pi}
\end{equation}
for all $t>0$. We call this the continuity bound.

Before moving on to establish the Gaussian estimate, we introduce the last bound which is a Sobolev inequality in the Lie group setting. If we choose the representations $(\pi, H_\pi) = (L, L^p(G))$ with $L$ the left translation of $G$ on $L^p(G)$, then we can define the Sobolev spaces 
\begin{equation*}
    W^{k, p}(G):= C^k(L^p(G)) 
\end{equation*}
as the $C^k$-vectors of $L^p(G)$ with $C^k$-norm in $L^p(G)$ defined as in \eqref{exp: Ck-norm}, and correspondingly the $C^k(G)$ as the $C^k$-vectors of $C_0(G)$, the continuous functions on $G$ that vanish at infinity with:
\begin{equation*}
    C^k(G) := C^k(C_0(G)) \quad \left\lVert \varphi \right\rVert _{C^m} = \sup_{|I|\leq m}\left\lVert L(X^I)\varphi \right\rVert _{L^\infty(G)}.
\end{equation*}
Let $U$ be a bounded open subset of the identity component of $G$. Denote $d:=\dim G$, and the Sobolev inequality is proved essentially the same way as the real case \cite[Lemma~III.2.3]{RobinsonBook1991}:
\begin{equation} \label{exp: sobolev embedding}
    \left\lVert \varphi \right\rVert _{C^m(U)} \leq C\left\lVert \varphi \right\rVert _{W^{k, p}(U)} 
\end{equation}
for all $0\leq m < k - \frac{d}{p}$ and for all $\varphi \in W^{k, p}(U)$, with $C$ being the implied constant.  

Next we consider the heat kernel associated with $D$. Fix a left $G$-invariant metric on $G$. This defines a distance function $|g|:=d(g, e_G)$ on $G$, as well as a left Haar measure on $G$. Then there is integrable function $k_t: \real_+\times G \to \complex$ such that for every strongly continuous and weakly* continuous representation $(H_\pi, \pi)$ of $G$, the corresponding heat semigroup $e^{-t\pi(\bar{D})}$ defined above satisfies
\begin{equation}
    e^{-t\pi(\bar{D})} = \int_G\pi(g)k_t(g)\mass{g}.
\end{equation}
with the kernel $(t,g)\mapsto k_t(g)$ defines an analytic function on $\real_+\times G$ \cite[Theorem~III.2.1]{RobinsonBook1991}.

For $1\leq p\leq \infty$, we define the weighted $L^p$-space $L^p_{\rho}$ for any $\rho \geq 0$ as
\begin{equation*}
    L^p_\rho := L^p(G; e^{\rho|g|}\mass{g}) \qquad \left\lVert \varphi \right\rVert _{L^p_\rho} := \left( \int \Big|\varphi(g) e^{\rho|g|} \Big|^p\mass{g} \right)^{1/p}
\end{equation*} 
and let $L^\rho$ be the left regular representation of $G$ on it. All weighted $L^p$-spaces are Banach algebras under convolution \cite[Section~I]{KUZNETSOVA2012}. Similarly, define the weighted $L^\infty$-space $L^\infty_\rho(G)$ with weighted $L^\infty$-norm as
\begin{equation}
    \left\lVert \varphi \right\rVert _{L^\infty_\rho} := \operatorname*{ess.sup}_{g\in G}e^{\rho|g|}|\varphi(g)|.
\end{equation}
We now derive a pointwise Gaussian bound from the inequalities we listed above. The representation in question is $(L^\rho, L^p_\rho(G))$ for suitably chosen $\rho$. The following lemma is stated in \cite[Corollary~III.2.5]{RobinsonBook1991}. We include the proof to complete the discussion.
\begin{lemma} \label{lemma: RobinsonIII.2.5}
    Let $G$ be a Lie group, and $X_1, \dots, X_d$ a basis of the Lie algebra $ \mathfrak{g}$. Given a $G$-invariant strongly elliptic operator $D$ of order $m$ on $G$ expressed in $X_i$, and form the kernel $k_t$ for $t> 0$ as above. Then for each $\rho\geq 0$ there exist $a, b, c>0$ and $\omega\geq 0$ such that we have the following pointwise Gaussian estimate:
    \begin{equation}
        \label{exp: RobinsonIII.2.5}
        \left\lvert L(X^\alpha)\partial_t^\ell k_t(g) \right\rvert \leq ab^{|\alpha|}c^\ell |\alpha|!\ell! (1+t^{-(\ell+\frac{|\alpha|+d+1}{m})})e^{\omega t}e^{-\rho|g|}
    \end{equation}
    for all $g>G$ and $t>0$. Here again $L(X^\alpha)$ denotes the left regular representation of $G$ and  $\partial_t$ denotes the differential in the time direction.
\end{lemma}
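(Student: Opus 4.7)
The plan is to apply the three listed operator-norm bounds---the small-time regularity \eqref{exp: robinsonII.2.1}, the continuity bound \eqref{exp: continuity bound}, and the holomorphy bound \eqref{exp: coarse holomorphy bound}---to the weighted representation $(L^\rho, L^1_\rho(G))$, and then upgrade the resulting operator-norm estimates to pointwise kernel bounds using the Sobolev embedding \eqref{exp: sobolev embedding}. This follows the strategy of \cite[Chapter~III]{RobinsonBook1991}.

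First I would verify that all of the listed bounds apply to the representation $\pi = L^\rho$ on $H_\pi = L^1_\rho(G)$: strong continuity of $L^\rho$ on $L^1_\rho$ follows from the standard density argument (the change of variables $h \mapsto gh$ only introduces the factor $e^{\rho|g|}$), while strong ellipticity of $D$ is representation-independent. Combining the small-time estimate \eqref{exp: robinsonII.2.1} (valid for $t\in(0,1]$) with the continuity bound \eqref{exp: continuity bound} applied via $S_t = S_1\circ S_{t-1}$ for $t > 1$ yields, for every multi-index $\alpha$,
\begin{equation*}
    \|L(X^\alpha)S_t\varphi\|_{L^1_\rho} \leq C\, b^{|\alpha|} |\alpha|!\,\bigl(1+t^{-|\alpha|/m}\bigr)\, e^{\omega t}\|\varphi\|_{L^1_\rho},
\end{equation*}
valid for all $t > 0$ and $\varphi \in L^1_\rho$.

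Next I would handle the time derivatives by holomorphy. Since $z \mapsto S_z$ is holomorphic in a sector $\{|\arg z| < C\}$ and $\|\bar{D}S_s\|\leq M/s$ on $\real_+$ by \eqref{exp: coarse holomorphy bound}, Cauchy's integral formula applied on a disk of radius $\sim t$ around $t$ (which sits inside the sector) gives $\|\partial_t^\ell S_t\| \leq c^\ell \ell!\, t^{-\ell} e^{\omega t}$. Splicing this bound with the previous step via the factorization $L(X^\alpha)\partial_t^\ell S_t = L(X^\alpha) S_{t/2} \circ \partial_t^\ell S_{t/2}$ produces the composite operator estimate
\begin{equation*}
    \|L(X^\alpha)\partial_t^\ell S_t\varphi\|_{L^1_\rho} \leq a\, b^{|\alpha|} c^\ell |\alpha|!\,\ell!\,\bigl(1+t^{-\ell-|\alpha|/m}\bigr)\, e^{\omega t}\|\varphi\|_{L^1_\rho}.
\end{equation*}

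Finally, to convert operator norms into pointwise kernel bounds, I would invoke \eqref{exp: sobolev embedding} with $p=1$ and $k = d+1$, which embeds $W^{d+1,1}(U)$ continuously into $C^0(U)$ on a fixed bounded neighborhood $U$ of $e_G$. Since $S_t\varphi = k_t * \varphi$ by the defining formula for $k_t$ applied to $L^\rho$, and since $D$ is $G$-invariant, the operator estimate above can be localized around any point $g_0 \in G$: by left-translating and taking $\varphi$ to run over an approximate identity supported near $e_G$, one bounds $\|L(X^\alpha)\partial_t^\ell k_t\|_{W^{d+1,1}(g_0 U)}$ by the composite operator norm above times $e^{-\rho|g_0|}$. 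The Sobolev embedding then converts this into the pointwise estimate, the extra $d+1$ spatial derivatives each contributing a factor $t^{-1/m}$ and producing the final exponent $\ell + (|\alpha|+d+1)/m$. The hard part is this last conversion: one must track how the weight $e^{\rho|h|}$ behaves under left-translation by $g_0$ and localize the Sobolev embedding to $g_0 U$ compatibly with the left-invariance of $D$; this is precisely where the extra $(d+1)/m$ in the decay rate is born.
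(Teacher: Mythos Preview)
Your proposal is correct and assembles the same ingredients as the paper's proof: the weighted representation $(L^\rho, L^1_\rho(G))$, the three operator-norm bounds \eqref{exp: robinsonII.2.1}, \eqref{exp: coarse holomorphy bound}, \eqref{exp: continuity bound}, and the Sobolev embedding \eqref{exp: sobolev embedding} with $p=1$, $k=d+1$. Two tactical differences are worth flagging. First, for the time derivatives you appeal to Cauchy's integral formula in the sector of holomorphy, whereas the paper uses the heat equation to write $\partial_t^\ell S_t = (-D)^\ell S_t$ and then factors $D^\ell S_t = S_{t_1}\bigl(D\,S_{t_2/\ell}\bigr)^\ell S_{t_3}$, bounding each inner factor by \eqref{exp: coarse holomorphy bound}; both routes produce $c^\ell\ell!\,t^{-\ell}$. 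Second, and more to the point, the paper sidesteps precisely the step you call ``the hard part'': instead of localizing the Sobolev embedding around each $g_0$ and tracking the weight under translation via an approximate identity, it invokes the $L^1$--$L^\infty$ duality
\[
\sup_{g\in G} e^{\rho|g|}\bigl|L(X^\alpha)\partial_t^\ell k_t(g)\bigr| \;=\; \sup_{\|\varphi\|_{L^1_\rho}\leq 1}\bigl|L^\rho(X^\alpha)D^\ell S_t\varphi(e_G)\bigr|
\]
to reduce everything to evaluation at the identity, and then applies \eqref{exp: sobolev embedding} once on a fixed neighborhood $U$ of $e_G$. This makes the appearance of the extra $(d+1)/m$ in the exponent immediate and avoids the translation bookkeeping altogether.
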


\begin{proof}
    For simplicity we prove the statement only for unimodular $G$ and comment on the other case in the end of the proof. We consider the representation to be the left-regular translation $L^\rho$ of $G$ on the weighted $L^1$-space $L_\rho^1(G)$.
    
    Let $e^{-tL^\rho(\bar{D})}$ be the corresponding heat semigroup and:
    \begin{equation}
        L^\rho(X^\alpha) D^\ell e^{-tL^\rho(\bar{D})} \varphi(e_G) = \int_G \left( L(X^\alpha)(-\partial_t)^\ell k_t \right)(g)e^{\rho|g|}\cdot(\varphi(g^{-1})e^{-\rho|g|})\mass{g}.
    \end{equation}
    Here we use the fact that $Dk_t +\partial_tk_t = 0$, and we can interchange the order of the derivative and the integral via the following equality
    \begin{equation*}
        D(k_t * \varphi) = (Dk_t)*\varphi,
    \end{equation*}
    again using Langlands' result that $k_t\in C^\infty(G)\cap L^1(G)$. Next by the duality between $L^1$ and $L^\infty$, we rewrite the integral as
    \begin{equation*}
        \sup_{g\in G} |(\partial_g^\alpha\partial_t^\ell k_t)(g)|e^{\rho|g|} = \sup_{\left\lVert \varphi \right\rVert _ {L^1_\rho}\leq 1 }\left\{ |L^\rho(X^\alpha) D^\ell e^{-tL^\rho(\bar{D})} \varphi(e_G)| \right\}.
    \end{equation*}
    Now consider the $C^k$-vectors of the weighted $L^1$-space
    \begin{equation*}
        W^{k, 1}_\rho(G):= C^k(L^1_\rho(G)),
    \end{equation*}
    with weighted $C^k$-norm $\left\lVert \cdot \right\rVert _{C^k_\rho}$ defined as in \eqref{exp: Ck-norm} by replacing $H_\pi$ by $L^1_\rho(G)$. Consider their restriction to a bounded open neighborhood $U$ of $e_G$, where the $L^1$-norm and the weighted $L^1$-norm are equivalent. Applying now the Sobolev embedding \eqref{exp: sobolev embedding} by taking $m, k, p = 0, d+1, 1$ we get a constant $c_{\rho, U}>0$ such that:
    \begin{equation*}
        \begin{split}
            |X^\alpha D^\ell e^{-tL^\rho(\bar{D})} \varphi(e_G)| &\leq \left\lVert X^\alpha D^\ell e^{-tL^\rho(\bar{D})\varphi(e_G)} \right\rVert _{C^0(U)}\\
             &\leq c_{\rho, U} \left\lVert X^\alpha D^\ell e^{-tL^\rho(\bar{D})} \varphi \right\rVert _{W^{d+1, 1}_\rho(U)}\\
            &\leq c_{\rho,U}\left\lVert D^\ell e^{-tL^\rho(\bar{D})} \varphi \right\rVert _ {W^{|\alpha|+d+1, 1}_\rho(U)}.
        \end{split}
    \end{equation*}
    Hence $ |\partial_g^\alpha\partial_t^\ell k_t(g)|$ remains bounded for all $g\in G$ and $t>0$ by the operator norm 
    \begin{equation*}
        |\partial_g^\alpha\partial_t^\ell k_t(g)| \leq c_{\rho, U}e^{-\rho|g|}\left\lVert D^\ell e^{-tL^\rho(\bar{D})} \right\rVert _{L^1_\rho\to W^{|\alpha|+d+1, 1}_\rho(U)}.
    \end{equation*}
    with $\left\lVert \cdot \right\rVert _{X\to Y}$ denotes the operator norm between Banach spaces $X$ and $Y$. Again from the fact that $\phi = e^{-tL^\rho(\bar{D})}\varphi$ solves the equation $\partial_t\phi +\bar{D}\phi=0$, we obtain the following identity:
    \begin{equation*}
        D^\ell e^{-tL^\rho(\bar{D})} = e^{-t_1L^\rho(\bar{D})}(D e^{-\frac{t_2}{\ell} L^\rho(\bar{D})})^\ell e^{-t_3L^\rho(\bar{D})}
    \end{equation*}
    for every triplet $t_1, t_2, t_3>0$ with $t_1+t_2+t_3 = t$. Rewrite $\beta :=|\alpha|+d+1$, we therefore want to estimate the following:
    \begin{equation*}
        \begin{split}
            &e^{\rho|g|}\left\lvert (\partial_g^\alpha\partial_t^\ell k_t(g)) \right\rvert \\
            \leq & c_{\rho, U}\left\lVert e^{-t_1L^\rho(\bar{D})} \right\rVert _{L^1_\rho\to W^{\beta, 1
            }} \left( \left\lVert D e^{-\frac{t_2}{\ell} L^\rho(\bar{D})} \right\rVert _{L^1_\rho\to L^1_\rho} \right)^\ell \left\lVert e^{-t_3 L^\rho(\bar{D})} \right\rVert _{L^1_\rho\to L^1_\rho}.
        \end{split}
    \end{equation*}
    The norms in three time intervals are bounded via different methods:
    \begin{enumerate}[label = (\Roman*)]
        \item For $t_1\in (0, 1]$, we use the small time estimate \eqref{exp: robinsonII.2.1}. Applying to $L^1_\rho$-norm to estimate the $W^{\beta,1}$-norm:  
        \begin{equation}
            \left\lVert e^{-t_1L^\rho(\bar{D})} \right\rVert _{L^1_\rho \to W^{\beta, 1
            }}\leq c'c^\beta \beta!t^{-\frac{\beta}{m}}
        \end{equation} 
        for some $c', c>0$. This contributes the factor $a'b^{|\alpha|} |\alpha|!t^{-\beta/m}$ to the final expression;
        \item For $t_2\in (0, \ell]$, recall that $e^{-tL^\rho(\bar{D})}$ defines a holomorphic family of bounded operator for $t$ in the real line, and $e^{-tL^\rho(\bar{D})}L^1_\rho \subseteq \operatorname{Dom}(\bar{D})$. Hence we have the following bound:
            \begin{equation}
                  \left\lVert De^{-\frac{t_2}{\ell}L^\rho(\bar{D})} \right\rVert _{L^1_\rho\to L^1_\rho} \leq c_2{\frac{\ell}{t_2}}
            \end{equation}
            for some $c_2 >0$, by changing the variable $t_2$ to $t_2/\ell$ and recalling the bound \eqref{exp: coarse holomorphy bound}. Hence the second factor contributes a term $\ell^\ell c_2^\ell t_2^{-\ell}\sim \ell! c_2^\ell t_2^{-\ell}$ to the final expression by Stirling's approximation;
        \item Lastly the continuity bound \eqref{exp: continuity bound} gives 
        \begin{equation*}
            \left\lVert e^{-t_3L^\rho(\bar{D})} \varphi\right\rVert _{L^1_\rho} \leq C''e^{\omega t_3}\left\lVert \varphi \right\rVert _{L^1_\rho},
        \end{equation*}
        and consequently $\left\lVert e^{-t_3L^\rho(\bar{D})} \right\rVert _{L^1_\rho\to L^1_\rho} \leq C''e^{\omega t_3}$ for some $C''>0$. Hence the third factor contributes a term $e^{\omega t_3}$ to the final expression.
    \end{enumerate}
    Summing up all the contributions we see the upper bound of the derivative is indeed as claimed.

    We end the proof with a comment on generalizing the proof to non-unimodular cases. All the key ingredients are the same, but one needs to take into account the difference between the right and the left-invariant Haar measure, and also to measure the growth of the modular function $\Delta(g)$ resulting from changing one measure to the other. The modular function is an analytic homomorphism and hence we can bound it pointwise by $e^{\omega'|g|}$ for some $\omega'>0$ and $C\geq 1$. Replacing $\rho$ in the argument by $\rho+\omega'$ one then gets the correct estimate.
\end{proof}

\begin{remark}
    The Gaussian bound we obtained in \Cref{lemma: RobinsonIII.2.5} is in fact crude and can be refined via techniques such as logarithmic Sobolev inequalities or Nash inequalities (see \cite{RobinsonBook1991} for an extensive survey). But for our purpose, the current estimate is sufficient.
\end{remark}

\section{Proof of the Schwartz estimate} \label{section: proof of the Schwartz estimate}

In this section we prove \Cref{thm: Schwartz kernel}, the major result of this paper. First we recall the theory of bounded perturbation of heat semigroups. The following theorem is from \cite[Section~13.4]{HIllePhillips74}. 

\begin{theorem} \label{thm: bounded perturbation}
    Let $X$ be a Banach space. Let $e^{-tA}$ a continuous semigroup of bounded operators on $X$ with $A$ be the infinitesimal generator of a semigroup, and $B$ a linear operator such that $ \operatorname{Dom}(B) \supseteq \operatorname{Dom}(A)$, and $Be^{-tA}$ defines a $t$-measurable family of bounded operators on $X$. Moreover, we assume that both $e^{-tA}$ and $Be^{-tA}$ satisfy the following estimates:
    \begin{itemize}
        \item For some $M, \omega >0$, $\left\lVert e^{-tA} \right\rVert \leq  Me^{t\omega}$ for all $t>0$.
        \item For some $\alpha <1$ and $c>0$, $\left\lVert Be^{-tA} \right\rVert \leq ct^{-\alpha}$ for all $t\in (0, 1]$.
    \end{itemize}
    Here $\left\lVert \cdot \right\rVert$ denotes the operator norm on $X$. Then $A+B$ also generates a strongly continuous semigroup $e^{-t(A+B)}$ which admits the \emph{Dyson-Phillips expansion}, which is an absolutely convergent series for all $t>0$: 
    \begin{equation} \label{exp: Dyson-Phillips expansion}
        e^{-t(A+B)}f = \sum_{k=0}^\infty B\operatorname{Per}^k(e^{-tA}f)
    \end{equation}
    for all $f\in X$. Here $\operatorname{Per}^i(u(t))$ is defined recursively as:
    \begin{equation}
        \operatorname{Per}^0(u(t)) = u \quad \operatorname{Per}^k(u(t)) = \int_{0}^t e^{-(t-s)A}B \operatorname{Per}^{k-1}(u(s))ds.
    \end{equation}
\end{theorem}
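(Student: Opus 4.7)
The plan is to construct the putative semigroup $e^{-t(A+B)}$ directly by defining $V_0(t) := e^{-tA}$, $V_k(t) := \int_0^t e^{-(t-s)A}\,B\,V_{k-1}(s)\,ds$, and $U(t) := \sum_{k=0}^\infty V_k(t)$, and then verifying that $U(t)$ is a strongly continuous semigroup whose generator is the closure of $A+B$. The whole approach hinges on the $\alpha < 1$ hypothesis, which controls the integrable singularity of $\|B e^{-tA}\|$ at the origin.

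First I would establish absolute convergence of the series in operator norm. Unrolling the recursion presents $V_k(t)$ as an integral over the simplex $\{0 < s_1 < \cdots < s_k < t\}$ of the product $e^{-(t-s_k)A}\,B\,e^{-(s_k-s_{k-1})A}\,B\cdots B\,e^{-s_1A}$. The key regrouping is to read this as $e^{-(t-s_k)A}\cdot [B e^{-(s_k-s_{k-1})A}]\cdots [B e^{-(s_2-s_1)A}]\cdot [B e^{-s_1A}]$, so that each bracket is bounded by $c(s_j - s_{j-1})^{-\alpha}$ (respectively $cs_1^{-\alpha}$), while the leading factor is bounded by $Me^{\omega t}$. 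For $t\in(0,1]$ the Dirichlet integral $\int u_1^{-\alpha}\cdots u_k^{-\alpha}\,du$ over $\sum u_i < t$ evaluates to $\Gamma(1-\alpha)^k t^{k(1-\alpha)}/\Gamma(k(1-\alpha)+1)$, yielding $\|V_k(t)\| \leq Me^{\omega t}\,(c\,\Gamma(1-\alpha))^k t^{k(1-\alpha)}/\Gamma(k(1-\alpha)+1)$. The denominator grows super-exponentially in $k$, so the series converges absolutely and uniformly on compact subsets of $(0,\infty)$; for $t>1$ one inflates the bound by splitting $t = n + r$ and iterating, at the cost of an additional constant in the exponential.

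Next I would verify the Volterra identity $U(t) = e^{-tA} + \int_0^t e^{-(t-s)A}\,B\,U(s)\,ds$, which follows from the previous absolute bound by interchanging summation and integration. The semigroup property $U(t+s) = U(t)U(s)$ then drops out from the simplex representation together with Fubini, essentially because the convolution of two Dyson series on adjacent time intervals reproduces the full series on the union. Strong continuity at $0$ is immediate: $V_0(t)\to I$ strongly and each $V_k(t)$ for $k\geq 1$ carries a factor $t^{k(1-\alpha)}$, so the tail vanishes as $t\to 0^+$.

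The step I expect to be the main obstacle is identifying the generator precisely with $A+B$ (or its closure). For $f \in \operatorname{Dom}(A) \subseteq \operatorname{Dom}(B)$ one has to show $\lim_{t\to 0^+}(U(t)f - f)/t = -(A+B)f$; the $k=0$ term contributes $-Af$ and the $k=1$ tail must contribute exactly $-Bf$, with all higher terms vanishing. The trouble is that the integrand $B e^{-(t-s)A} B e^{-sA}f$ for $k=1$ has a non-integrable singularity as $s\to t$, so one cannot naively commute the $t$-limit with the integral. The standard remedy is to split $\int_0^t = \int_0^{t/2} + \int_{t/2}^t$: the first piece is handled via the smoothing bound $\|Be^{-uA}\|\leq cu^{-\alpha}$ and dominated convergence, and the second via strong continuity together with $f\in\operatorname{Dom}(B)$. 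Once the generator is identified on the core $\operatorname{Dom}(A)$, a standard density argument for $C_0$-semigroups extends the identification to its closure, completing the proof.
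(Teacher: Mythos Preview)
Your proposal is correct but proceeds by a different technical route than the paper. You bound the $k$-th Dyson term by unrolling to the simplex integral and evaluating the Dirichlet integral explicitly, obtaining factorial-type decay $\Gamma(k(1-\alpha)+1)^{-1}$. The paper instead follows Hille--Phillips: it sets $\phi(t)=\|e^{-tA}\|$, $\psi(t)=\|Be^{-tA}\|$, observes that these satisfy the abstract properties $\int_0^1(\phi+\psi)<\infty$ and $\psi(t+s)\le\psi(t)\phi(s)$, and then proves a general lemma that $\sum_k \phi*\psi^{*k}(t)$ converges by establishing geometric decay $\phi*\psi^{*k}(t)\le 2^{-k}t^{-2}e^{t\omega_1}$ via an inductive splitting of the convolution at $t/2$. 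Your Gamma-function argument is more elementary and gives sharper constants, but is tied to the specific power-law hypothesis $\|Be^{-tA}\|\le ct^{-\alpha}$; the paper's convolution lemma is stated for arbitrary $\psi$ satisfying the two abstract properties, and this extra generality is actually reused in the subsequent proposition to bound weighted $C^\ell$-norms of the perturbed kernel.

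One small correction: in your generator-identification sketch you call the singularity of $Be^{-(t-s)A}Be^{-sA}f$ at $s\to t$ ``non-integrable'', but since $\alpha<1$ the factor $(t-s)^{-\alpha}$ is integrable; the genuine issue there is rather that the operator-norm bound $\|V_k(t)\|\lesssim t^{k(1-\alpha)}$ alone does not give $\|V_k(t)\|/t\to 0$ for $k=2$ when $\alpha\ge 1/2$, so one must exploit $f\in\operatorname{Dom}(A)\subseteq\operatorname{Dom}(B)$ to upgrade the estimate on $V_1(s)f$ before iterating. The paper sidesteps this entirely by deferring to Hille--Phillips for the generator identification.
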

The proof is modelled after a more general argument from \cite[Chapter~13]{HIllePhillips74}. We include a proof here, as the main ingredients of the proof will be used again in proving \Cref{prop: perturbed kernel estimates}. We begin with proving that the Dyson-Phillips expansion defines an absolutely convergent series, hence $\{e^{-t(A+B)}: t>0\}$ indeed forms a strongly continuous semigroup. This can be done by majorizing the series by the following two functions
\begin{equation}
    \phi(t) := \left\lVert e^{-tA} \right\rVert, \qquad \psi(t) := \left\lVert Be^{-tA} \right\rVert.
\end{equation} 
Both functions are non-negative and measurable functions in $t$, and they satisfy two conditions:
\begin{enumerate}[label = Property \arabic*:]
    \item $\int_0^1 \phi(t)+\psi(t)\mass{t} <\infty$;
    \item By the semigroup property of $e^{-tA}$, $\psi(t)$ satisfies the inequality: For all $t, s>0$,
    \begin{equation}
        \psi(t+s) \leq \psi(t) \phi(s).
    \end{equation}
\end{enumerate}
These two properties alone give some quantitative bounds on $\phi$ and $\psi$:
\begin{enumerate}[wide = 0pt, label = (\Roman*)]
    \item The finite sub-multiplicative function $\phi(t)$ is bounded on each interval of the form $(\epsilon, 1/\epsilon)$ by \cite[Theorem~7.4.1]{HIllePhillips74}. Together with Property 2, we see $\psi(t)$ is also bounded there;
    \item $\lim_{t\to \infty}t^{-1}\log \phi(t)$ exists, by the fact that $\phi$ is sub-multiplicative \cite[Theorem~7.6.1]{HIllePhillips74}. Denote this limit by $\omega_0$. This also gives an upper bound on $\psi$ by Property 2:
          \begin{equation} \label{exp: Hille13.4.4}
              \limsup_{t\to \infty}t^{-1}\log \psi(t) \leq \omega_0
          \end{equation}
    \item For any $\omega>\omega_0$ we have $\int_0^\infty e^{-t\omega}(\phi(t)+\psi(t))\mass{t}=:M_\omega<\infty$. This is immediate from the first property and \eqref{exp: Hille13.4.4}. Using the property 2 with the growth estimate of $\phi(t)$,
          \begin{equation*}
              0\leq 4e^{-t\omega} \psi(t) \leq (e^{-\omega(t-t_1)}\phi(t-t_1) + e^{-t_1\omega}\psi(t_1))^2
          \end{equation*}
          by the sub-additivity inequality. Hence
          \begin{align*}
              t(e^{-t\omega}\psi(t))^{1/2}
               & = 2\int_0^{t/2} (e^{-t\omega}\psi(t))^{1/2}\mass{t_1}                                               \\
               & \leq \int_0^{t/2} e^{-\omega(t-t_1)}\psi(t-t_1) + e^{-t_1\omega}\phi(t_1)\mass{t_1} \leq M_\omega,
          \end{align*}
          and we obtain the upper bound
          \begin{equation} \label{exp: Hille13.4.5}
              \psi(t) \leq e^{t\omega}t^{-2}M_\omega^2
          \end{equation}
          for all $t>0$.
\end{enumerate}
The goal is to bound $\operatorname{Per}^k(e^{-tA}f)$ in the Dyson-Phillips series by the convolution product $\phi * \psi^{*k}$, where $\psi^{*k}$ denotes the $k$-th convolution product of $\psi$. The series of convolution products can be estimated as follows \cite[Lemma~13.4.3]{HIllePhillips74}:
\begin{lemma} \label{lemma: Hille13.4.3}
    Suppose $\psi_0$ and $\psi_1$ are two nonnegative measurable functions satisfying property 1 and 2 as stated above. Then the series $\theta(t):= \sum_{k=0}^\infty(\psi_0 * \psi_1^{*k})(t)$ converges uniformly with respect to $t$ in the interval of $(\varepsilon, 1/\varepsilon)$ for $\varepsilon\in (0,1)$. Moreover, if $\omega>\omega_0>0$ is such that $\int_{0}^{\infty}e^{-t\omega}\psi_1(t) \mass{t}\leq 1$, then $\int_0^\infty e^{-t\omega}\theta(t)\mass{t}< \infty$.
\end{lemma}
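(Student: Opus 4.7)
My plan splits the proof into the two claims, which although independent share common ingredients drawn from Properties~1 and 2.

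For the $L^1$-integrability with respect to $e^{-\omega t}\mass t$, the most transparent path is via Laplace transforms. Write $\hat\psi_i(\omega):=\int_0^\infty e^{-\omega t}\psi_i(t)\mass t$. Since convolution of nonnegative functions becomes multiplication under the Laplace transform, Tonelli gives
\begin{equation*}
    \int_0^\infty e^{-\omega t}(\psi_0*\psi_1^{*k})(t)\mass t=\hat\psi_0(\omega)\,\hat\psi_1(\omega)^k.
\end{equation*}
The hypothesis $\hat\psi_1(\omega)\leq 1$ (which one can, if needed, turn into a strict inequality by replacing $\omega$ by any $\omega'>\omega$ using the pointwise bound \eqref{exp: Hille13.4.5} applied to $\psi_1$) together with observation~(III) applied to $\psi_0$ makes $\hat\psi_0(\omega)<\infty$. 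Summing the geometric series, interchanging the order of summation and integration by Tonelli's theorem, I obtain
\begin{equation*}
    \int_0^\infty e^{-\omega t}\theta(t)\mass t \;\le\; \frac{\hat\psi_0(\omega)}{1-\hat\psi_1(\omega)}\;<\;\infty,
\end{equation*}
which is the desired conclusion.

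For the uniform convergence on $(\varepsilon,1/\varepsilon)$, I would first use observation~(I), which was extracted from Properties 1 and 2, to deduce uniform bounds $\psi_0(s),\psi_1(s)\leq M_\varepsilon$ on some enlarged interval $(\varepsilon/4,2/\varepsilon)$. The goal is then a factorial-decay estimate of the shape
\begin{equation*}
    \psi_1^{*k}(t)\;\leq\; A_\varepsilon\frac{(B_\varepsilon t)^{k-1}}{(k-1)!}\qquad t\in(\varepsilon,1/\varepsilon),
\end{equation*}
proved by induction on $k$. The recursion $\psi_1^{*k}(t)=\int_0^t\psi_1^{*(k-1)}(t-s)\,\psi_1(s)\mass s$ is bounded by splitting the $s$-integration into $(0,\varepsilon/4)$, where $\psi_1(s)$ is only integrable (Property 1, giving a bounded contribution $\int_0^{\varepsilon/4}\psi_1(s)\mass s<\infty$ together with the uniform bound on $\psi_1^{*(k-1)}(t-s)$ over the shifted compact), and $(\varepsilon/4,t)$, where $\psi_1(s)\leq M_\varepsilon$ is uniform; the sub-multiplicative Property 2 keeps both parts of the bound controllable as $k$ grows. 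Convolving once more with $\psi_0$, which is itself bounded on $(\varepsilon/4,2/\varepsilon)$, preserves the factorial decay, so $\sum_k(\psi_0*\psi_1^{*k})(t)$ is dominated on $(\varepsilon,1/\varepsilon)$ by a convergent numerical series independent of $t$, giving uniform convergence.

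The step I expect to be the genuine obstacle is the induction in the second part: because $\psi_1$ is a priori only integrable, not bounded, near $s=0$, the naive inductive bound $\psi_1^{*k}(t)\leq M^k t^{k-1}/(k-1)!$ fails and one must manage the near-zero contribution at every induction step via Property 1 while still extracting the factorial from the bounded regime. Everything else (the Laplace-transform calculation, the use of Tonelli, and the extraction of the $L^1$-bound) is essentially bookkeeping on top of the geometric series.
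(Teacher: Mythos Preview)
Your Laplace--transform argument for the integrability claim is correct and is exactly what the paper does (the paper, like you, writes $\int_0^\infty e^{-t\omega}\theta(t)\mass t=\hat\psi_0(\omega)(1-\hat\psi_1(\omega))^{-1}$ and remarks that the strict inequality $\hat\psi_1(\omega)<1$ can be arranged).

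For the uniform convergence you take a genuinely different path, and there is a gap in the induction you sketch. Your split of $\int_0^t\psi_1^{*(k-1)}(t-s)\psi_1(s)\mass s$ at $s=\varepsilon/4$ handles the singularity of $\psi_1(s)$ near $s=0$, but on the second piece $s\in(\varepsilon/4,t)$ the argument $t-s$ now ranges all the way down to $0$, so you must control $\psi_1^{*(k-1)}(u)$ for $u$ near $0$. Your induction hypothesis lives only on a compact interval bounded away from $0$, so it gives you nothing there; the only available information near $0$ is the $L^1$ bound $\int_0^T\psi_1^{*(k-1)}\le(\int_0^T\psi_1)^{k-1}$, which is geometric in $k$, not factorial, and destroys the factorial decay you are aiming for. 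It is also not clear how Property~2 (which concerns $\psi_i$, not the iterated convolutions) rescues this step.

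The paper avoids this difficulty by using the \emph{global} pointwise bound of observation~(III), $\psi_i(t)\le C_i\,t^{-2}e^{\omega_1 t}$ for all $t>0$, rather than local boundedness on compacts. After choosing $\omega_1>\omega_0$ large enough that $\int_0^\infty e^{-\omega_1 t}(\phi+\psi_0)\le 1$ and $\int_0^\infty e^{-\omega_1 t}(\phi+\psi_1)\le 1/16$, one has $\psi_0(t)\le t^{-2}e^{\omega_1 t}$ and $\psi_1(t)\le\tfrac{1}{16}t^{-2}e^{\omega_1 t}$. The induction is then
\[
(\psi_0*\psi_1^{*n})(t)\;\le\;2^{-n}\,t^{-2}e^{\omega_1 t}\qquad\text{for all }t>0,
\]
proved by splitting the convolution at $s=t/2$ (so that on each half one of the two factors enjoys the bound $(t/2)^{-2}=4t^{-2}$ and the other is integrated against its weighted $L^1$ bound). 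This gives geometric decay $2^{-n}$ uniformly in $t$ on every interval $(\varepsilon,1/\varepsilon)$, which is what is needed. The key point you are missing is that observation~(III) already absorbs the near-zero singularity into the explicit majorant $t^{-2}$, so the induction never has to work on shrinking or shifting compacts.
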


\begin{proof}
    By the first quantitative bound above we see that both $\phi(t)$ and $\psi(t)$ are bounded on each interval of the form $(\epsilon, 1/\epsilon)$. Choose $\omega_1 >\omega_0$ so that:
    \begin{equation*}
        \int_0^\infty e^{-\omega_1\xi}(\phi(t)+\psi_0(t))\mass{t} \leq 1; \quad \int_0^\infty e^{-\omega_1\xi}(\phi(t)+\psi_1(t))\mass{t} \leq \frac{1}{16}.
    \end{equation*}
    Now from \eqref{exp: Hille13.4.5} we get $\psi_0(t)\leq t^{-2}e^{t\omega_1}$ and $\psi_1(t)\leq \frac{1}{16}t^{-2}e^{t\omega_1}$. By induction we will establish:
    \begin{equation} \label{exp: Hille13.4.7}
        \psi_0*\psi_1^{*n}(t)\leq 2^{-n}t^{-2}e^{t\omega_1}
    \end{equation}
    It suffices to estimate the constant for the induction step using the quantitative bounds we established before the lemma:
    \begin{align*}
             & (\psi_0*\psi_1^{*n})(t)
        \\
        =    & e^{t\omega_1}\left( \int_0^{t/2} + \int_{t/2}^t \left( e^{-(t-s)\omega_1}\psi_0*\psi_1^{*(n-1)}(t-s) \right) \cdot \left( e^{-s\omega_1}\psi_1(s) \right)\mass{s} \right)               \\
        \leq & e^{t\omega_1} \left( 2^{-(n-1)}2^2  t^{-2}\int^{t/2}_0 e^{-s\omega_1}\psi_1(s)\mass{s} + \frac{2^2}{16} t^{-2} \int_0^{t/2}e^{-s\omega_1}(\psi_0*\psi_1^{*(n-1)})(s)\mass{s} \right) \\
        \leq & 2^{-n}t^{-k}e^{-t\omega_1},
    \end{align*}
    with the two inequalities coming from the induction hypothesis. Having established \eqref{exp: Hille13.4.7} we see $\theta_t$ is an absolutely convergent series, as it is majorized by the uniformly convergent series $\sum_n 2^{-n}t^{-2}e^{t\omega_1}$. This proves the first claim. For the second claim, we note the following:
    \begin{equation*}
        \int_{0}^{\infty} e^{-t\omega}(\psi_0*\psi_1^{*n})(t)\mass{t} = \left( \int_0^\infty e^{-t\omega}\psi_0(t)\mass{t} \right)\cdot \left( \int_0^\infty e^{-t\omega}\psi_1(t)\mass{t} \right)^n
    \end{equation*}
    Hence we can write $\int_0^\infty e^{-t\omega}\theta(t)\mass{t} =\left( \int_0^\infty e^{-t\omega}\psi_0(t)\mass{t} \right)\cdot (1-\int_0^\infty e^{-t\omega}\psi_1(t)\mass{t})^{-1} $, which is finite by our assumption.
\end{proof}
Now the proof of the theorem is achieved by combining all these estimates:
\begin{proof}[Proof of \Cref{thm: bounded perturbation}]
    We begin by proving that the Dyson-Phillips series converges uniformly in the strong operator topology for $t>0$ by bounding it using $\phi$ and $\psi$:
    \begin{equation*}
        \left\lVert \operatorname{Per}^n(e^{-tA}f) \right\rVert \leq \phi*\psi^{*n}(t) \quad \left\lVert B \operatorname{Per}^{n}(e^{-tA}f) \right\rVert \leq \psi^{*n} (t)
    \end{equation*}
    First note that the functions $\phi = \psi_0$ and $\psi = \psi_1$ satisfy the estimates in \Cref{lemma: Hille13.4.3}, by the assumption we made in \Cref{thm: bounded perturbation}. Also by our assumption on $e^{-tA}$, the case $n=0$ is trivially true. The case for general $n$ is shown by an easy induction with the following inequality:
    \begin{equation*}
        \left\lVert\operatorname{Per}^{k+1}(e^{-tA}) \right\rVert \leq \int_{0}^t \left\lVert e^{-(t-s)A} \right\rVert \cdot \left\lVert B\operatorname{Per}^{k}(e^{-sA}) \right\rVert \mass{s} \leq \phi*\psi^{*(k+1)}(t)
    \end{equation*}
    and similarly for $B\operatorname{Per}^{k+1}(e^{-tA})$:
    \begin{equation*}
        \left\lVert B\operatorname{Per}^{k+1}(e^{-tA}f) \right\rVert \leq \int_0^{t}\left\lVert Be^{-(t-s)A} \right\rVert \cdot \left\lVert B\operatorname{Per}^{k}(e^{-sA}) \right\rVert \mass{s} \leq \psi^{*(k+2)}(t)
    \end{equation*}
    Now $ \sum_{k=0}^\infty\left\lVert \operatorname{Per}^k(e^{-tA}) \right\rVert $ is bounded by $\theta(t)$, a fact we have proven in the above lemma to be uniformly convergent in $t$ on any interval $(\varepsilon, 1/\varepsilon)$ for $0<\varepsilon<1$. These show $e^{-t(A+B)}$ defines a strongly continuous heat semigroup.
\end{proof}

Fix now a finite-dimensional $G$-representation $(\tau, V_\tau)$, together with a compact subgroup $L\subseteq G$, such that $V_\tau|L$ is a unitary representation of $L$. Form now a $G$-invariant strongly elliptic operator $\mathcal{D}$ on the homogeneous vector bundle $E_\tau:= G\times_\tau V_\tau \to G/L$ like \cite[\S 1.1]{Barbasch1983}. Note the smooth sections $\Gamma(E)$ can be identified with the smooth vectors of the induced module $\ind_{L}^G (\tau)$. Explicitly,
\begin{equation}
    C^\infty(G; \tau) := \{F:G\to V_\tau \mid f\in C^\infty, F(gl)= \tau(l^{-1})F(g) \text{ for all }l\in L\},
\end{equation}
where $G$ acts on $C^\infty(G; \tau)$ via right regular representation. It is isomorphic with $\Gamma(E)$ via the canonical $G$-equivariant isomorphism
\begin{equation}\label{exp: bundle to module} 
    \mathcal{A}: \Gamma(E) \longrightarrow C^\infty(G; \tau) \qquad F(g):= (\mathcal{A}f)(g) = \tau(g^{-1})(f(gL)).
\end{equation}
Fix a complex basis $X_i$ of $ \mathfrak{g}_\complex$. Note that every $G$-invariant operator $\mathcal{D}$ can be identified via $\mathcal{A}$ with an operator of the following form
\begin{equation*}
    D_\tau = \sum_I X^I \otimes C_{I} \in [U( \mathfrak{g}_\complex )\otimes \End(V)]^L,
\end{equation*} 
where $L$ acts on $U( \mathfrak{g}_\complex )\otimes \End(V)$ by:
\begin{equation} \label{exp: action on differential operator}
    l\circ (\sum_i D_i\otimes C_i ) := \sum_i\Ad_l(D_i)\otimes (\tau(l)\circ C_i \circ \tau(l)^{-1}).
\end{equation}
We further assume that $\mathcal{D}$ is $G$-invariant, i.e., it commutes with the natural action of $G$ on the $E_\tau$. Then the heat kernel associated with the semigroup $e^{-tD_\tau}$ can be treated as a function $k_t^\tau: G\to \End(V)$ which satisfies the following covariance property: 
\begin{equation} \label{exp: L-equivariance}
    k^\tau_t(g) = \tau(a)k^\tau_t(a^{-1}gb)\tau(b)^{-1} \qquad \text{ for }x\in G, a,b \in L.
\end{equation}
We note that $D_\tau$ is often not an elliptic operator with constant coefficients because of the covariance in the direction of fiber of $E_\tau$. Nonetheless we can still estimate the growth via bounded perturbations on part of the operator with constant coefficients.

\begin{proposition}\label{prop: perturbed kernel estimates}
    Let $D_\tau$ be a $G$-invariant differential operator on $E_\tau$ as given above. Suppose $D_\tau = D + B$, with $D \in U( \mathfrak{g}_\complex )$ a $G$-invariant strongly elliptic operator with constant coefficients of order $m$; and $B$ a differential operator of order $\ell<m$ of the following form:
    \begin{equation} \label{exp: remainder operator}
        B = \sum_{|I|\leq \ell} D^I\otimes C_I \in U( \mathfrak{g}_\complex )\otimes \End(V).
    \end{equation} 
    Then the perturbed semigroup $e^{-t(\overline{D+B})}$ is strongly continuous in $t$, with $k_t^{D+B}$ its kernel. The derivatives of $k^{D+B}_t$ satisfy the following estimate: For each $\rho>0$ and every derivative $X^J$ of order $|J|<\ell$, 
    \begin{equation}
        \int_G e^{\rho|g|} \left\lVert  L(X^J) k_t^{D+B}(g)  \right\rVert _{\End(V)} \mass{g}  <\infty
    \end{equation}
    for all fixed $t>0$. 
\end{proposition}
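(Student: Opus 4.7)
The plan is to apply \Cref{thm: bounded perturbation} on the weighted Banach space $X := L^1_\rho(G;V_\tau)$ of $V_\tau$-valued functions. Since $V_\tau$ is finite dimensional and the weight $e^{\rho|g|}$ is submultiplicative (by the triangle inequality $|gh|\leq|g|+|h|$), the space of $\End(V_\tau)$-valued $L^1_\rho$ kernels is a Banach algebra acting on $X$ by convolution. Testing a left-invariant convolution operator against an approximate identity at $e_G$ shows that the $L^1_\rho$ norm of the $\End(V_\tau)$-valued kernel is dominated by the $X\to X$ operator norm, and the reverse inequality is Young's convolution inequality; hence operator-level estimates transfer directly into kernel estimates.

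To verify the hypotheses of \Cref{thm: bounded perturbation} with $A=D$, I rely on the theory summarized in \Cref{section: elliptic operators}: $D$ is $G$-invariant and strongly elliptic with constant coefficients, so Langlands-Nelson gives that $D$ generates a strongly continuous holomorphic semigroup on $X$ satisfying $\|e^{-tD}\|_{X\to X}\leq Me^{\omega t}$ by \eqref{exp: continuity bound}. The crucial input is the operator version of the small-time estimate \eqref{exp: robinsonII.2.1}, applied to $(L^\rho, L^1_\rho(G))$:
\[
\|L(X^I)e^{-tD}\|_{L^1_\rho\to L^1_\rho}\leq kl^{|I|}|I|!\,t^{-|I|/m}\quad(t\in(0,1]).
\]
Summing over $|I|\leq\ell$ with the bounded endomorphism factors $C_I$, we obtain $\|Be^{-tD}\|_{X\to X}\leq c\,t^{-\ell/m}$. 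Since $\ell<m$, the exponent $\alpha:=\ell/m<1$ is admissible, and \Cref{thm: bounded perturbation} applies: $\overline{D+B}$ generates a strongly continuous semigroup on $X$, and its Dyson-Phillips expansion converges absolutely in operator norm via \Cref{lemma: Hille13.4.3}.

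For the case $|J|=0$ the operator $D+B$ is itself strongly elliptic (its principal symbol equals that of $D$ tensored with the identity of $V_\tau$), so standard parabolic regularity provides a smooth convolution kernel $k_t^{D+B}:G\to\End(V_\tau)$ for $t>0$; combined with the operator-kernel comparison above, $\|k_t^{D+B}\|_{L^1_\rho(G;\End(V_\tau))}\leq \|e^{-t(D+B)}\|_{X\to X}<\infty$.

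For derivatives of order $0<|J|<\ell$, I would differentiate the Dyson-Phillips series term by term. Because $L(X^J)$ is a left translation and therefore commutes with every right-invariant operator appearing in the expansion (including $D$ and the $R(X^I)$ components of $B$), the operator $L(X^J)\operatorname{Per}^k(e^{-tD})$ equals the same $k$-fold iterated integral with its leftmost factor $e^{-t_0 D}$ replaced by $L(X^J)e^{-t_0 D}$, whose operator norm is bounded by $Ct_0^{-|J|/m}$ with $|J|/m<1$. Running the induction from \Cref{lemma: Hille13.4.3} with the modified majorant $\tilde\phi(t)=Ct^{-|J|/m}$ in place of $\phi(t)$ yields absolute convergence of $\sum_k L(X^J)\operatorname{Per}^k(e^{-tD})$ in operator norm on $X$. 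The main obstacle is justifying the termwise differentiation of the series, which amounts to showing that the partial sums converge to $e^{-t(D+B)}$ in the weighted Sobolev norm on $W^{|J|,1}_\rho(G;V_\tau)$; once this is established, the operator-kernel comparison yields $L(X^J)k_t^{D+B}\in L^1_\rho(G;\End(V_\tau))$ as claimed.
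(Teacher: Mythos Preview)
Your proposal is essentially correct and follows the same overall strategy as the paper (verify the hypotheses of \Cref{thm: bounded perturbation} via the small-time estimate, then control the derivative via the majorization of \Cref{lemma: Hille13.4.3} with a modified $\psi_0$), but with two organisational differences worth noting. First, the paper works on the weighted $L^\infty$ space $L^\infty_{\rho,\tau}$ and obtains the $L^1_\rho$ bound on the kernel by duality (evaluating $e^{-t(\overline{D+B})}\varphi$ at $e_G$), whereas you work directly on $L^1_\rho(G;V_\tau)$ and identify the kernel norm with the operator norm; the $L^1$ route is arguably cleaner but you need a $\dim V_\tau$-dependent constant when passing from $\sup_{|v|=1}\int |k(g)v|$ to $\int \|k(g)\|_{\End V}$. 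Second, the paper absorbs the derivative by bounding $\|L(X^J)e^{-t(\overline{D+B})}\varphi(e_G)\|_V \leq \|e^{-t(\overline{D+B})}\varphi\|_{C^\ell_{\rho,\tau,\infty}}$ and then runs the Dyson--Phillips majorization directly on the $C^\ell$-norm, taking $\psi_0(t)=\|e^{-tD}\|_{L^\infty_\rho\to C^\ell_{\rho,\infty}}$; you instead differentiate each term of the series and take $\psi_0(t)=\|L(X^J)e^{-tD}\|_{L^1_\rho\to L^1_\rho}$. Both majorants behave like $t^{-|J|/m}$ (resp.\ $t^{-\ell/m}$) near zero, so \Cref{lemma: Hille13.4.3} applies either way.

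One remark: your commutativity justification is unnecessary and a bit misleading. In the Dyson--Phillips expansion the outermost factor of $\operatorname{Per}^k(e^{-tD})$ is $e^{-(t-s)D}$, and applying the closed operator $L(X^J)$ from the left lands on that factor trivially; no commutation with $D$ or $B$ is needed or used. The genuine point you correctly flag is the justification of termwise differentiation of the series, and that is handled cleanly by the closedness of $L(X^J)$ on $L^1_\rho$: the partial sums converge in $L^1_\rho$ to $e^{-t(\overline{D+B})}\varphi$, their images under $L(X^J)$ converge by your majorant, hence the limit lies in $\operatorname{Dom}(L(X^J))$ with the expected value.
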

\begin{proof}
    For clarity of exposition we abbreviate $L^p_{\tau}:= L^p(G; V_\tau)$ the $L^p$-spaces with coefficients in $V_\tau$ for $1\leq p\leq \infty$, and the weighted $L^p$-spaces in a similar way. Recall that $L^\infty_{\rho, \tau}(G)$ is the weighted $L^\infty$-space with norm 
    \begin{equation*}
        \left\lVert \varphi \right\rVert _{L^\infty_{\rho, \tau}} = \sup_{g\in G}e^{-\rho|g|}\left\lVert \varphi(g) \right\rVert _V.
    \end{equation*} 
    Denote $C_{\rho, \tau, \infty}^k = C^k(L_{\rho, \tau}^\infty(G))$ its $C^k$ vectors, and define its $C^k$-norms as \eqref{exp: Ck-norm}, replacing $H_\pi$ by $L_{\rho, \tau}^\infty(G)$.

    The strong continuity of the perturbed heat semigroup $e^{-t(\overline{D+B})}$ is straightforward from \Cref{thm: bounded perturbation}. Derivatives of the perturbed heat kernel can be estimated by bounding its weighted $L^\infty$-norm in a similar way as in the proof of \Cref{lemma: RobinsonIII.2.5}:
    \begin{equation}
        \begin{split}
            \int_G \left\lVert  L(X^J) k_t^{D+B}(g)  \right\rVert _{\End(V)} \mass{g}
            &= \sup_{\left\lVert \varphi \right\rVert _{L^\infty_{\rho, \tau}}\leq 1 }\left\{ \left\lVert L(X^J) e^{-tL^\rho(\overline{D+B})} \varphi(e_G) \right\rVert _V \right\} \\
            &\leq \sup_{\left\lVert \varphi \right\rVert _{L^\infty_{\rho, \tau}}\leq 1 }\left\{ \left\lVert e^{-tL^\rho(\overline{D+B})} \varphi \right\rVert _{C_{\rho, \tau, \infty}^{\ell}} \right\}.
        \end{split}
    \end{equation}
    As the Dyson-Phillips series \eqref{exp: Dyson-Phillips expansion} is absolutely convergent, it suffices to estimate the $C^k$-norm of each term $\operatorname{Per}^k(e^{-t\bar{D}}\varphi)$. Apply now \Cref{thm: bounded perturbation} further to the $C^{\ell}$-norms of each term by verifying the norm assumptions:
    \begin{itemize}[wide = 0pt]
        \item As $D$ acts on $L^\infty_{\rho, \tau}$ by $L^\rho\otimes \mathrm{Id}_V$,  
        \begin{equation*}
            \left\lVert e^{-tL^\rho(\bar{D})} \right\rVert _{L^\infty_{\rho, \tau} \to C^{\ell}_{\rho, \tau, \infty}}  = \left\lVert e^{-tL^\rho(\bar{D})} \right\rVert _{L^\infty_{\rho} \to C^{\ell}_{\rho, \infty}} \leq Me^{t\omega}
        \end{equation*}
        for some $M, \omega>0$, by the continuity bound \eqref{exp: continuity bound};
        \item For small $t\in (0,1]$ the small time estimate \eqref{exp: robinsonII.2.1} gives:
              \begin{equation}
                \begin{split}
                     \left\lVert Be^{-tL^\rho(\bar{D})} \right\rVert _{L^\infty_{\rho, \tau} \to L^\infty_{\rho, \tau}} &\leq \sum_I \left\lVert B_I \right\rVert _{\End(V)} \left\lVert D^Ie^{-tL^\rho(\bar{D})} \right\rVert _{L^\infty_{\rho} \to L^\infty_{\rho}} \\
                     & \leq a \sum_{I} \left\lVert e^{-tL^\rho(\bar{D})} \right\rVert _{L^\infty_{\rho} \to  C^\ell_{\rho, \infty}}\\
                     &\leq a' b^{\ell} \ell! t^{-\ell/m} \sim C_{\ell} t^{-\ell/m}
                \end{split}
              \end{equation}
              with $\ell/m <1$ by our assumption.
    \end{itemize}
    Now repeating the notations and arguments in \Cref{lemma: Hille13.4.3} we see that the $C^{\ell}_{\rho, \infty}$-norm is indeed finite. For $\varphi = \psi\otimes v\in L^2(G)\otimes V_\tau^* \cong L^2(G, V_\tau)$ of unit norm, 
    \begin{equation}
            \left\lVert e^{-tL^\rho(\overline{D+B})} \varphi \right\rVert _{C_{\rho, \tau, \infty}^{\ell}} \leq \sum_{k=0}^\infty \left\lVert \operatorname{Per}^k(e^{-t\bar{D}}\varphi) \right\rVert _{C_{\rho, \infty}^{\ell}} \leq \theta_{B, A}(t)
    \end{equation}
    with the absolutely convergent series $\theta_{B, A}(t)$, expressed in the sum of convolution products of functions in $t$, is finite:
    \begin{equation}
        \theta_{B, A}(t) = \sum_{k=0}^\infty  \left\lVert e^{-t\bar{D}} \right\rVert _{L^\infty_\rho \to C_{\rho, \infty}^{\ell}} * \left\lVert Be^{-t\bar{D}} \right\rVert _{L^\infty_{\rho, \tau} \to L^\infty_{\rho, \tau}}^k (t) <\infty.
    \end{equation}
    As \Cref{lemma: Hille13.4.3} proved, the series $\theta_{B, A}(t)$ is majorized by $\sum_{n}2^{-n}t^{-2}e^{-t\omega_1}$ for some positive $\omega_1>\omega>0$. This proves that $e^{\rho|g|} \left\lVert  \partial^I_g k_t^{D+B}  \right\rVert _{\End(V)}\in L^1(G)$ for all $t>0$.
\end{proof}

This estimate shows that $e^{\rho|g|}L_{X^I}k_t^{D+B}\in L^1(G, \End(V))$ for any $\rho>0$ and any differentiation $X^I\in U( \mathfrak{g}_\complex )$ from the left. To establish Schwartz estimate, we only need the following variant of the Sobolev lemma adapted to Lie groups \cite[Lemma~5.1]{Poulsen1972a}, which follows directly from the Sobolev lemma in $\real^n$ by choosing local coordinates:
\begin{lemma}\label{lemma: Sobolev lemma}
    Fix an integer $s>\dim G$. Then for each compact neighborhood $\mathcal{B}(e_G)$ of $e_G$ there exists a constant $C$ such that:
    \begin{equation}\label{exp: sobolev}
        f(e_G) \leq C\sum_{|I|\leq s}\int_{\mathcal{B}(e_G)} \left\lvert L_{X^I}f(y) \right\rvert \mass{y}
    \end{equation}
    for all $f\in C^\infty(G)$. 
\end{lemma}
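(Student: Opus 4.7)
The plan is to reduce to the classical Sobolev embedding $W^{s,1}(\real^n) \hookrightarrow C^0(\real^n)$ for $s > n$, via a choice of local coordinates around $e_G$. Concretely, I would start by fixing a chart $\varphi: U \to V \subset \real^d$ with $\varphi(e_G)=0$, where $V$ is an open neighborhood of the origin and $d = \dim G$; the exponential map $\exp: \mathfrak{g} \to G$, composed with the basis identification $\mathfrak{g} \cong \real^d$ given by $X_1,\dots,X_d$, provides the canonical choice. Shrinking if necessary, I may assume $\overline{U} \subset \mathcal{B}(e_G)$.

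Next, I would relate the left-invariant derivatives $L_{X^I}$ to the coordinate derivatives $\partial^I$. Because the $X_i$ are smooth vector fields forming a basis at every point of $U$ (in fact globally on $G$), there exist smooth matrix-valued functions $A(x), B(x)$ on $\overline{U}$ such that $\partial_{x_i} = \sum_j A_{ij}(x) L_{X_j}$ and $L_{X_i} = \sum_j B_{ij}(x) \partial_{x_j}$, with $A(0) = B(0) = \mathrm{Id}$ since $d\exp_0 = \mathrm{Id}$. Iterating and applying the product rule, any coordinate derivative $\partial^\alpha$ of order $|\alpha| \leq s$ can be written as a finite sum
\begin{equation*}
  \partial^\alpha f(x) = \sum_{|I| \leq |\alpha|} a_{\alpha,I}(x) \, (L_{X^I} f)(x)
\end{equation*}
with $a_{\alpha,I} \in C^\infty(\overline{U})$, and symmetrically $L_{X^I}$ expands in coordinate derivatives.

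Now I would apply the standard Sobolev inequality on $\real^d$: for $s > d$ there exists $C_0$ such that
\begin{equation*}
  |u(0)| \leq C_0 \sum_{|\alpha| \leq s} \int_V |\partial^\alpha u(x)| \, \mass{x}
\end{equation*}
for all $u \in C^\infty(\overline{V})$. Applying this to $u = f \circ \varphi^{-1}$, replacing each $\partial^\alpha u$ by its expansion in $L_{X^I}f$, absorbing the $C^\infty$-coefficients $a_{\alpha,I}$ and the Jacobian of $\varphi$ (all bounded on the compact set $\overline{U}$) into a new constant, and enlarging the domain of integration from $U$ to $\mathcal{B}(e_G)$ yields the desired inequality.

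There is no real obstacle; the only subtlety is making sure to convert between Euclidean and left-invariant derivatives cleanly and to bound the resulting coefficient functions uniformly on the compact set, which is immediate from smoothness. I would not expect any technical difficulty beyond this bookkeeping.
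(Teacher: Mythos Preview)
Your proposal is correct and follows exactly the approach the paper indicates: the paper does not give a detailed proof but simply states that the lemma ``follows directly from the Sobolev lemma in $\real^n$ by choosing local coordinates'' (citing Poulsen). Your argument is precisely the standard unpacking of that sentence, and there is nothing to add.
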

We have now prepared for the proof of the main result \Cref{thm: Schwartz kernel}. Recall the definition of Schwartz spaces \eqref{exp: Schwartz space}. 
\begin{proof}[Proof of \Cref{thm: Schwartz kernel}]
    First we dominate the growth of $\phi_0^{G/Z}(x)$ by choosing appropriate $e^{\rho|g|}$. From \eqref{exp: spherical vector estimate} we see that:
\begin{equation}
    -\log \phi_0^{G/Z}(x) \leq \gamma d(e_G, x) \quad \text{ for all } x\in G
\end{equation}
for some constant $\gamma>0$. Hence if we choose the constant $\rho>\gamma>0$ in \Cref{prop: perturbed kernel estimates} this indeed implies $\phi_0^G(x)^{-2/p}L_{X^I}k_t^{D+B}\in L^1(G, \End(V))$ for all $t>0$ and $p>0$. Apply Sobolev inequality \eqref{exp: sobolev} now:
\begin{equation} \label{exp: MS163}
    \begin{split}
        &\left\lVert \phi_0^G(x)^{-2/p}L_{X^I}k_t^{D+B}(x) \right\rVert _{\End(V)} \\
        \leq& C\sum_{|J|\leq s}\int_B \left\lVert L_{X^I}\phi_0^G(yx)^{-2/p}L_{X^J}k_t^{D+B}(yx)\right\rVert _{\End(V)}\mass{y}\\
         \leq &C\sum_{|J|\leq s}\left\lVert L_{X^\alpha}\phi_0^G(yx)^{-2/p}L_{X^J}k_t^{D+B} \right\rVert _{L^1(G; \End(V))}<\infty.
    \end{split}
\end{equation}
So we have proved that $\nu_{D_1, -, r}(F)$ is finite for all $D_1\in U(\mathfrak{g}_\complex)$ and $r>0$. Next we establish a relation between $L_{X^I}R_{X^J}k_t$ and $L_{X^{I'}}k_t$. To prove this we use the fact that $k_t$ is $\Ad(K)$-invariant, i.e.: 
\begin{equation*}
    k_t(x^{-1}gx) = k_t(g)
\end{equation*} 
for all $x\in K$. This follows readily from the fact that $K$ acts on $V$ by unitary operators, and acts on $ \mathfrak{g}$ by isometries with respect to the metric $B^\theta$.


The following lemma is modelled after an argument of Harish-Chandra \cite[\S11, Lemma~17]{harish1966}:
\begin{lemma} \label{lemma: bounded two sides by one side}
    Let $V$ be a unitary $K$-bimodule and $f:G \to V $ be an $\Ad(K)$-invariant $C^\infty$-function. Then for each pair $(D, D')\in U( \mathfrak{g}_\complex)\times U( \mathfrak{g}_\complex )$, one can choose a finite number of $D_i\in U( \mathfrak{g}_\complex ) (1\leq i \leq p)$ such that:
    \begin{equation}
        \left\lVert L_{D}R_{D'}f(g) \right\rVert _V \leq \sum_{j=1}^p \left\lVert L_{D_j}f(g) \right\rVert  _V\quad \left\lVert L_{D}R_{D'}f(g)  \right\rVert _V; \leq \sum_{j=1}^p \left\lVert R_{D_j}f(g) \right\rVert _V.
    \end{equation}
\end{lemma}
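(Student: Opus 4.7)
The approach is to convert two-sided derivatives into one-sided derivatives by systematically exploiting the $\Ad(K)$-invariance of $f$. Two basic identities will drive the argument: the infinitesimal invariance relation $(L_Y + R_Y)f = 0$ for $Y\in \mathfrak{k}$, obtained by differentiating $f(\exp(-tY)g\exp(tY)) = f(g)$ at $t=0$; and the pointwise change-of-variables identity
\begin{equation*}
    R_X f(g) = -L_{\Ad(g^{-1})X} f(g), \qquad X \in \mathfrak{g},
\end{equation*}
which follows from $g\exp(tX) = \exp(t\Ad(g)X)\cdot g$ and the chain rule.

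The first step is a reduction via the Poincar\'e--Birkhoff--Witt isomorphism $U(\mathfrak{g}_\complex) \cong S(\mathfrak{p}_\complex) \otimes U(\mathfrak{k}_\complex)$. I will write $D' = \sum_i P_i K_i$ with $P_i\in S(\mathfrak{p}_\complex)$ and $K_i \in U(\mathfrak{k}_\complex)$ of uniformly bounded degree. Iterating the identity $R_Y f = -L_Y f$ for $Y \in \mathfrak{k}$ and using that $L$ and $R$ commute, one checks that $R_K f = L_{K^\sigma}f$ for every $K \in U(\mathfrak{k}_\complex)$, where $K^\sigma$ denotes the image of $K$ under the principal anti-involution of $U(\mathfrak{g}_\complex)$. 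Consequently
\begin{equation*}
    L_D R_{D'} f = \sum_i L_D R_{P_i} L_{K_i^\sigma} f = \sum_i L_{D K_i^\sigma} R_{P_i} f,
\end{equation*}
and the task reduces to bounding $L_{\tilde D} R_P f$ for $P \in S(\mathfrak{p}_\complex)$. I then proceed by induction on $\deg P$. For the base case $P = X \in \mathfrak{p}$, the pointwise identity expresses $R_X f(g)$ as $-L_{\Ad(g^{-1})X} f(g)$; expanding $\Ad(g^{-1})X = \sum_\alpha c_\alpha(g) X_\alpha$ in a fixed basis $\{X_\alpha\}$ of $\mathfrak{g}$ gives
\begin{equation*}
    \|L_{\tilde D} R_X f(g)\|_V \leq \sum_\alpha |c_\alpha(g)|\, \|L_{\tilde D X_\alpha} f(g)\|_V,
\end{equation*}
and the inductive step reapplies this expansion to each newly produced right derivative.

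The main obstacle lies in controlling the coefficients $c_\alpha(g)$ uniformly in $g$, since as matrix entries of $\Ad(g^{-1})$ they can grow like $e^{|H|}$ under the Cartan decomposition $g = u\exp(H)v$. This is resolved by a further use of $\Ad(K)$-invariance: because $V_G \subseteq Z_G$ acts trivially under $\Ad$, the image $\Ad(K) = \Ad(K_1)$ is compact, and the identity $R_X f(g) = R_{\Ad(k^{-1})X} f(g)$ for $k \in K_1$, together with the covariance $L_Y f(k^{-1}gk) = L_{\Ad(k)Y}f(g)$, permits one to average out the non-compact part of $\Ad(g^{-1})X$ at the cost of enlarging the finite collection $\{D_j\}$ by $K_1$-conjugates of the basis. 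Finally, the dual bound with right derivatives follows by applying the same argument to $\tilde f(g) := f(g^{-1})$, which is also $\Ad(K)$-invariant and whose left and right derivatives are interchanged via $R_X\tilde f = (L_X f)^\sim$.
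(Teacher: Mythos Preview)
Your reduction via $U(\mathfrak{g}_\complex)\cong S(\mathfrak{p}_\complex)\otimes U(\mathfrak{k}_\complex)$ and the handling of the $\mathfrak{k}$-part by $R_Yf=-L_Yf$ are fine, but the step you flag as ``the main obstacle'' is a genuine gap that your proposed fix does not close. Two concrete problems:

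First, the identity $R_Xf(g)=R_{\Ad(k^{-1})X}f(g)$ you invoke is not correct: differentiating $f(g\exp tX)=f(k^{-1}g\exp(tX)k)$ gives $R_Xf(g)=R_{\Ad(k^{-1})X}f(k^{-1}gk)$, i.e.\ the base point moves. So conjugating $X$ by $K_1$ does not produce a pointwise inequality at the same $g$.

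Second, and more seriously, even after you use the $KAK$ decomposition and the unitary $K$-bimodule structure to reduce to $g=a\in A^+$, the coefficients of $\Ad(a^{-1})X$ for $X\in\mathfrak{p}$ are still unbounded: writing $X\in\mathfrak{p}$ in terms of $X_\alpha+\theta X_\alpha$ with $X_\alpha\in\mathfrak{g}_\alpha$, one has $\Ad(a^{-1})(X_\alpha+\theta X_\alpha)=e^{-\alpha(\log a)}X_\alpha+e^{\alpha(\log a)}\theta X_\alpha$, and the second factor blows up on $A^+$. No amount of $K_1$-conjugation can repair this, since $\Ad(K_1)$ preserves $\mathfrak{p}$ and cannot turn an exponentially growing direction into a contracting one.

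The paper's proof replaces the Cartan decomposition by the Iwasawa decomposition $\mathfrak{g}=\mathfrak{k}\oplus\mathfrak{a}\oplus\mathfrak{n}_\mathfrak{p}$ and the PBW ordering $U(\mathfrak{k}_\complex)U(\mathfrak{a}_\complex)U(\mathfrak{n}_\complex)$. After reducing to $g=a\in A^+$ via $KAK$, one shifts $L_{D_\mathfrak{k}}$ to $R_{D_\mathfrak{k}}$ using your $\Ad(K)$-invariance identity, $L_{D_\mathfrak{a}}$ to $R_{D_\mathfrak{a}}$ using commutativity of $\mathfrak{a}$, and $L_{D_\mathfrak{n}}$ to $R_{\Ad(a^{-1})D_\mathfrak{n}}$. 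The point is that on $U(\mathfrak{n}_\complex)$ the operator $\Ad(a^{-1})$ acts by $e^{-\sum m_i\alpha_i(\log a)}$ with $m_i\geq 0$ and $\alpha_i(\log a)\geq 0$, hence with coefficients $\leq 1$. This contracting property of $\Ad(a^{-1})|_{\mathfrak{n}}$ on the closed positive chamber is the missing ingredient; it has no analogue for $\mathfrak{p}$.
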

\begin{proof}
Decompose the Lie algebra $\mathfrak{g = k\oplus a \oplus n_p}$ via the Iwasawa decomposition as in \eqref{exp: Iwasawa}. Recall $ \mathfrak{n_p} = \sum_{i=1}^l  \mathfrak{g}_{\alpha_i} $ is the space containing all the simple positive $( \mathfrak{g};  \mathfrak{a} )$-roots $\{\alpha_i\}_{i=1}^l$. Applying the Poincaré-Birkhoff-Witt theorem, we decompose the universal enveloping algebra as:
\begin{equation} \label{exp: PBW}
    U( \mathfrak{g}_\complex ) = U( \mathfrak{k}_\complex )  U( \mathfrak{a}_\complex )  U( \mathfrak{n}_\complex ).
\end{equation}
Here $U(\mathfrak{l}_\complex)$ are the corresponding universal enveloping algebra generated by corresponding algebra $\mathfrak{l}$.

Fix now an integer $d \geq 0$ such that $D, D' \in U_d( \mathfrak{g}_\complex )$. Denote $U_d( \mathfrak{g}_\complex )\subseteq U( \mathfrak{g}_\complex )$ the subset containing all elements of order $\leq d$. We can choose a basis $\{D_{ \mathfrak{k}} D_{ \mathfrak{a}} D_{ \mathfrak{n} }\}$ of $U_d( \mathfrak{g}_\complex )$ where $ D_{ \mathfrak{k} }\in U( \mathfrak{k}_\complex )$, $ D_{ \mathfrak{a} }\in U( \mathfrak{a}_\complex )$ and $ D_{ \mathfrak{n} }\in U( \mathfrak{n}_\complex )$. Denote this basis as $\mathcal{B}_d$. Moreover, recall that the natural action of $G$ on $U( \mathfrak{g}_\complex )$ extends the adjoint action on $\mathfrak{g}_\complex \subseteq U_1( \mathfrak{g}_\complex )$. Then:
\begin{equation}\label{exp: adj on nilpotent}
    \Ad(a)D_{ \mathfrak{n} } = \exp\left( \sum_{1 \leq i \leq l}m_i \alpha_i(\log a) \right)D_{ \mathfrak{n} }
\end{equation}
for all $a\in A = \exp \mathfrak{a}$. This equality is obtained by extending the map $\Ad(a)  \mathfrak{g} _{\alpha_i} = e^{\alpha(\log a)} \mathfrak{g} _{\alpha_i}$. Moreover, as $U(\mathfrak{n}_\complex)$ is generated by $g_{\alpha_i}$ as an algebra, we see that $m_i$ are non-negative integers. For the same reason we can expand the $\Ad(K)$-action based on its action on the basis:
\begin{equation}
    \Ad(k)D = \sum_{b\in \mathcal{B}_d} a_b( k )b \qquad \Ad(k)D' = \sum_{b\in \mathcal{B}_d} a'_b( k )b
\end{equation}
for $k\in K$.  Observe that $U_d(\mathfrak{g}_\complex)$ is a continuous representation of $K$, hence $a_b$ and $a_b'$ are continuous functions in $K$. 

Next by the $KAK$-decomposition \cite[Lemma~2.5]{HerbSchwartz4}, we write $G = KA^+K$ where $A^+ = \exp(\overline{ \mathfrak{a^+_p} })$ where $ \mathfrak{a}^+$ contains all $H\in  \mathfrak{a}$ such that $\alpha_i(H) \geq 0$ for all restricted roots $\alpha_i$. Denote:
\begin{equation}
    c = \sup_{k\in K} \max_{b\in B} (|a_b(k)|, |a'_b(k)|)
\end{equation}  
This constant is clearly finite if $K$ is compact. In the case $K$ is noncompact, we can appeal to \eqref{exp: refined Cartan decomposition} and then write $a_b$ and $a_b'$ as continuous functions that are $Z$-invariant instead \cite[Lemma~2.7]{HerbSchwartz4}. In all cases, $c$ is finite. Next we estimate growths in the $A$-direction under derivations. Write $g = k_1ak_2\in KA^+K$, then:
\begin{equation} \label{exp: growth in A direction}
    \|L_{D}R_{D'}f(g)\|_V \leq \left\lVert L_{(\Ad_{k_1^{-1}}D)}R_{(\Ad_{k_2}D')}f(a) \right\rVert _V \leq \sum_{b, b'\in \mathcal{B}_d} c^2\left\lVert L_bR_{b'}f(a) \right\rVert _V.
\end{equation}
Here the first inequality uses the fact that $V$ is a unitary $K$-bimodule
\begin{equation*}
    \|k_1vk_2\|_V = \|v\|_V
\end{equation*} 
for all $k_1, k_2\in K$ and $v\in V$. Now it suffices to estimate the norm for each $L_bR_{b'}f(a)$. Write each $b\in \mathcal{B}_d$ in the form of
\begin{equation*}
    b = D_{ \mathfrak{k} }^b D_{ \mathfrak{a} }^b D_{ \mathfrak{n} }^b
\end{equation*}
with respect to the decomposition \eqref{exp: PBW}. Using the fact $f$ is $\Ad(K)$-invariant, we see $L_{D_{ \mathfrak{k} }}f = R_{D_{ \mathfrak{k} }}f$ for any $D_{ \mathfrak{k} }\in U( \mathfrak{k}_\complex)$. Now combining the fact that $ \mathfrak{a}$ is abelian and the identity \eqref{exp: adj on nilpotent}, one shifts the actions on the left to the right side one by one:
\begin{equation}
    L_bR_{b'}f(a) = L_{D_{ \mathfrak{k} }^b D_{ \mathfrak{a} }^b D_{ \mathfrak{n} }^b}R_{b'}f(a) = R_{D^b_{ \mathfrak{k}} D^b_{ \mathfrak{a} }\Ad_{a^{-1}}(D_{ \mathfrak{n} }^{b})b'}f(a)
\end{equation}  
Where $R_{D_1D_2}f = R_{D_2}(R_{D_1}f)$. Next, 
\begin{equation} \label{exp: growth in one direction}
    \left\lVert R_{D^b_{ \mathfrak{k}} D^b_{ \mathfrak{a} }\Ad_{a^{-1}}(D_{ \mathfrak{n} }^{b})b'}f(a) \right\rVert _V  \leq  \left\lVert R_{D^b_{ \mathfrak{k}}D^b_{ \mathfrak{a} }D^b_{ \mathfrak{n} } b'  } f(a) \right\rVert _V
\end{equation}
by combining \eqref{exp: adj on nilpotent} and the fact $\alpha_i(\log a) \geq 0$ since $\alpha_i$ is a positive root. At last one notices that the following finite set
\begin{equation*}
    \{D^b_{ \mathfrak{k} }D^b_{ \mathfrak{a} }D^b_{ \mathfrak{n} }b' \mid b, b'\in \mathcal{B}_d\}
\end{equation*}
spans a finite-dimensional subspace in $U( \mathfrak{g}_\complex )$. We denote a basis of this subspace as $\{l_j:1\leq j\leq p\}$. We can choose a uniform bound $C'>0$ such that each element in the above set can be written as sums $\sum_{1\leq j \leq p} \gamma_j l_j$ with $|\gamma_j| \leq C'$. We can now estimate the derivatives from both sides by combining \eqref{exp: growth in A direction} and \eqref{exp: growth in one direction}:
\begin{equation*}
    \left\lVert L_DR_{D'}f(g) \right\rVert _V  \leq \sum_{b, b'\in \mathcal{B}_d} c^2 \left\lVert R_{D^b_{ \mathfrak{k}}D^b_{ \mathfrak{a} }D^b_{ \mathfrak{n} } b'  } f(a) \right\rVert _V \leq \sum_{1\leq j \leq p} |\mathcal{B}_d| C'c^2 \left\lVert R_{l_j}f(g) \right\rVert _V.
\end{equation*}
Note we have again exploited the fact that $f$ is $\Ad(K)$-invariant and the fact that $K$ acts on $V$ by isometry to deduce the last inequality. This concludes the proof. 
\end{proof}
    
By inspecting \Cref{thm: generalized_Kuga}, we see that the Hodge Laplacian $\Delta_p$ associated with the Riemannian metric induced by $B^\theta$ can be written in the following form, by taking $\tau = R$ the right regular representation:
    \begin{equation*}
        \Delta_p = R(\Delta_0)\otimes \mathrm{Id}_{\wedge^p  \mathfrak{g}^*_\complex } + \text{terms with lower order}
    \end{equation*}
    where $R(\Delta_0) = -R(\bog)$ is a strongly elliptic operator, and the remainder terms can be expressed in the form of \eqref{exp: remainder operator}. Now combining \Cref{lemma: bounded two sides by one side} with the Schwartz estimate from one side \eqref{exp: MS163}, we see that the kernel of $\Delta_p$ is indeed a Schwartz function.
\end{proof}

\section{Schwartz estimates on nilpotent groups} \label{section: nilpotent}
The above Schwartz estimates can be extended to cases beyond the Hodge Laplacian and reductive groups. In a subsequent paper we discuss how the spinor Laplacian associated with the same Riemannian metric can be proven to be Schwartz class in the sense of \eqref{exp: Schwartz space} following the same proof.


As an direct consequence of our estimate \eqref{exp: MS163}, we can also prove that the Hodge Laplacian $\Delta_p$ on a nilpotent Lie group $N$ are of Schwartz class. We briefly recall the definition of Schwartz functions on $N$ from \cite[\S A.2]{Corwin1990}, and then prove the result as a corollary that will be useful in \cite[TOCHANGE]{Thesis}.

Let $N$ be a simply connected real nilpotent Lie group. If $N$ is given polynomial coordinates $\gamma: \real^n\to N$, that is,
\begin{equation*}
    \log \circ \gamma: \real^n\to  \mathfrak{n}
\end{equation*}
defines a polynomial map with polynomial inverse. Though it is not dependent on the choice of basis, typically if we choose $\{X_1, \dots, X_n\}$ to be a weak Malcev basis \cite[Theorem~1.1.13]{Corwin1990}, then
\begin{equation*}
    \gamma(x_1, \dots, x_n) = \exp(x_1X_1)\cdots \exp(x_nX_n)
\end{equation*}
defines a polynomial coordinate. Now we can define $\mathcal{S}(N)$ as the pullback of the Schwartz functions on $\real^n$. More explicitly, a function $f$ on $N$ is Schwartz if it is finite under all seminorms:
\begin{equation*}
    \left\lVert x^\alpha D^\beta (f\circ \gamma)\right\rVert _{L^\infty(\real^n)} <\infty
\end{equation*}
for all $D^\beta = \partial_{x_1}^{\beta_1}\cdots \partial_{x_n}^{\beta_n}$ and $x^\alpha = x^{\alpha_1}\cdots x_{\alpha_n}$ a monomial. Note this is equivalent to defining it as the space of smooth functions $f\in C^\infty(N)$ such that
\begin{equation}
    \left\lVert p_\alpha L(X^\alpha)f \right\rVert _\infty <\infty
\end{equation} 
for all $p_\alpha\in \complex[G]$ polynomials on $G$, and $X_\alpha\in U( \mathfrak{g}_\complex )$ under any chosen basis of $ \mathfrak{g}$ \cite[Corollary~A.2.3]{Corwin1990}. Note it suffices to consider left derivatives in this case, as the Lie brackets of nilpotent Lie groups can be expressed by polynomial functions.

\begin{corollary}
    Let $N$ be a simply connected nilpotent Lie group, and fix on $N$ a left $N$-invariant Riemannian metric. Then the heat kernel $k_t^p$ of the Hodge Laplacian $\Delta_p$ on $N$ is a Schwartz function.
\end{corollary}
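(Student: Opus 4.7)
The plan is to mirror the proof of \Cref{thm: Schwartz kernel}, with two simplifications specific to the nilpotent setting: first, the Schwartz condition on $N$ involves only left derivatives weighted by polynomials, so no analogue of \Cref{lemma: bounded two sides by one side} is needed; second, every polynomial on $N$ is dominated by the exponential weight $e^{\rho|g|}$ for arbitrary $\rho>0$, so the polynomial weights are already captured by the weighted $L^1$ estimate of \Cref{prop: perturbed kernel estimates} and there is no spherical-function term to chase.

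First I would express the Hodge Laplacian $\Delta_p$ on $N$ using the general formula of \Cref{thm: rep_Laplacian}, taking $\tau = R$ the right regular representation and $V = \wedge^p\mathfrak{n}^*_\complex$. This yields the decomposition
\begin{equation*}
    \Delta_p = -R(\bar{\Omega}_N)\otimes \mathrm{Id}_{\wedge^p\mathfrak{n}^*_\complex} + B,
\end{equation*}
where $-R(\bar{\Omega}_N)$ is the scalar Laplacian on $N$ (a $G$-invariant strongly elliptic operator of order $m=2$ with constant coefficients) and $B$ is a first-order perturbation of the form \eqref{exp: remainder operator}, coming from $\Delta_{\circ,\wedge}+\Delta_{\wedge,\circ}+\square_\circ$. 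Applying \Cref{prop: perturbed kernel estimates} to $D = -R(\bar{\Omega}_N)$ and $B$ then gives, for every $\rho>0$ and every $X^J\in U(\mathfrak{n}_\complex)$,
\begin{equation*}
    \int_N e^{\rho|g|}\,\|L(X^J)k_t^p(g)\|_{\End(\wedge^p\mathfrak{n}^*_\complex)}\,\mass{g} < \infty,
\end{equation*}
where higher-order derivatives beyond the range given directly by the proposition are obtained by bootstrapping through the semigroup identity $k_t^p = k_{t/2}^p * k_{t/2}^p$ and distributing the left derivative onto a single factor.

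The decisive observation specific to nilpotent groups is that in polynomial coordinates $\gamma:\real^n\to N$ associated with a weak Malcev basis, the Riemannian distance $|g|$ is comparable to a norm in the Euclidean coordinates, so every polynomial $p_\alpha\in\complex[N]$ satisfies $|p_\alpha(g)|\leq C_{p_\alpha,\rho}\,e^{\rho|g|}$ for arbitrary $\rho>0$. Combined with the weighted $L^1$ estimate above, this gives $p_\alpha\cdot L(X^J)k_t^p\in L^1(N;\End(\wedge^p\mathfrak{n}^*_\complex))$ for every polynomial and every left derivative. A final application of the Sobolev-type inequality \Cref{lemma: Sobolev lemma} to the product $x\mapsto p_\alpha(x)\,L(X^I)k_t^p(x)$, followed by the Leibniz rule to expand $L(X^J)(p_\alpha\cdot L(X^I)k_t^p)$ into a finite sum of (polynomial)$\times$(higher left derivative of $k_t^p$), yields the pointwise Schwartz bound $\|p_\alpha\,L(X^I)k_t^p\|_\infty<\infty$, which is the characterization of $\mathcal{S}(N)$ quoted from \cite[Corollary~A.2.3]{Corwin1990}.

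The main obstacle is securing the weighted $L^1$ bound for left derivatives of arbitrarily high order, since \Cref{prop: perturbed kernel estimates} as stated restricts to $|J|<\ell$ with $\ell$ the order of the perturbation. I expect to handle this by either refining the Dyson--Phillips argument to bound $\|e^{-t(\overline{D+B})}\|_{L^\infty_{\rho,\tau}\to C^n_{\rho,\tau,\infty}}$ for any $n$ (using the small-time estimate \eqref{exp: robinsonII.2.1} at the cost of a stronger $t^{-n/m}$ singularity) or by iterating the semigroup identity on successive halvings of $t$ to trade time for regularity; both routes are routine once the Dyson--Phillips bookkeeping is in place.
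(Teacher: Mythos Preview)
Your proposal is correct and follows essentially the same route as the paper: decompose $\Delta_p$ via \Cref{thm: rep_Laplacian}, apply \Cref{prop: perturbed kernel estimates} to get the weighted $L^1$ bound, dominate polynomials by $e^{\rho|g|}$, and finish with \Cref{lemma: Sobolev lemma} plus the Leibniz rule. If anything, your final paragraph is more scrupulous than the paper's own proof, which applies the Sobolev step without explicitly addressing the order restriction $|J|<\ell$ in the statement of \Cref{prop: perturbed kernel estimates}; your proposed fixes (refining the Dyson--Phillips bound to $C^n$-norms or bootstrapping via $k_t^p = k_{t/2}^p * k_{t/2}^p$) are exactly the right way to close that gap.
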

\begin{proof}
    Recall first that the content of \Cref{thm: rep_Laplacian} holds for any Lie group $G$. If we fix $V$ to be the $L^2(N)$ with right-regular action, then we see
    \begin{equation*}
        \Delta_p = -R(\sum_{i}X_i^2) + \sum_i R(X_i) \otimes C_i
    \end{equation*}
    with $C_i\in \End(\wedge^*\mathfrak{n}^*)$. In particular, $\Delta_p$ satisfies the assumptions of \Cref{prop: perturbed kernel estimates}. Hence the kernel of $\Delta_p$ is a Schwartz function. Consequently, \begin{equation*}
        \int_N e^{\rho|g|} \left\lVert  L(X^J) k_t^{p}(g)  \right\rVert _{\End(\wedge^* \mathfrak{n}^* )} \mass{g}  <\infty.
    \end{equation*}
    Because the polynomial grows slower than $e^{\rho|g|}$, we have
\begin{equation*}
    p_IL_{X^I}k_t^{p}\in L^1(N, \End(\wedge^p \mathfrak{n}^* )).
\end{equation*} 
In fact, $L_{X^J}p_I$ remains polynomial therefore bounded for any $|J|\geq 0$, therefore,
\begin{equation}
    L_{X^J}(p_I L_{X^I}k_t^{p})\in L^1(N, \End(\wedge^p \mathfrak{n}^* )).
\end{equation}
Now \Cref{lemma: Sobolev lemma} and the bound \eqref{exp: MS163} applies similarly to the nilpotent case. 
\begin{equation} 
    \begin{split}
       \left\lVert p_I L_{X^I}k_t^{D+B}(x) \right\rVert _{\End(V)} \leq C\sum_{|J|\leq s}\left\lVert L_{X^J}p_I L_{X^I}k_t^{D+B} \right\rVert _{L^1(G; \End(V))}<\infty,
    \end{split}
\end{equation}
and the claim is proven.
\end{proof}

\bibliographystyle{alpha}
\bibliography{../references} 

\end{document}